\documentclass[reqno,12pt]{amsart}
\usepackage{amssymb}
\usepackage[english]{babel}
\usepackage{xspace}
\usepackage[bookmarksnumbered,colorlinks]{hyperref}
\usepackage{graphics}
\usepackage{geometry}
\usepackage{enumitem}
\usepackage{mathrsfs}
\renewcommand{\hat}{\widehat}
\newtheorem{theorem}{Theorem}[section]
\newtheorem{lemma}[theorem]{Lemma}
\newtheorem{definition}[theorem]{Definition}
\newtheorem{corollary}[theorem]{Corollary}
\newtheorem{proposition}[theorem]{Proposition}
\theoremstyle{remark}
\newtheorem{remark}[theorem]{Remark}
\theoremstyle{definition}
\newtheorem{example}[theorem]{Example}

\newcommand{\bt}{\begin{theorem}}
	\newcommand{\et}{\end{theorem}}
\newcommand{\bco}{\begin{corollary}}
	\newcommand{\eco}{\end{corollary}}
\newcommand{\bd}{\begin{definition}}
	\newcommand{\ed}{\end{definition}}
\newcommand{\bl}{\begin{lemma}}
	\newcommand{\el}{\end{lemma}}
\newcommand{\bpr}{\begin{proposition}}
	\newcommand{\epr}{\end{proposition}}
\newcommand{\bere}{\begin{remark}}
	\newcommand{\ere}{\end{remark}}
\newcommand{\beq}{\begin{equation}}
	\newcommand{\eeq}{\end{equation}}
\def\bal#1\eal{\begin{align}#1\end{align}}
\def\baln#1\ealn{\begin{align*}#1\end{align*}}
\def\bml#1\eml{\begin{multline}#1\end{multline}}
\def\bmln#1\emln{\begin{multline*}#1\end{multline*}}
\def\bga#1\ega{\begin{gather}#1\end{gather}}
\def\bgan#1\egan{\begin{gather*}#1\end{gather*}}

\DeclareMathOperator{\cat}{cat}

\DeclareMathOperator{\dist}{dist}
\newcommand{\N}{\ensuremath{\mathbb{N}}\xspace}
\newcommand{\R}{\ensuremath{\mathbb{R}}\xspace}
\newcommand{\eps}{\varepsilon}
\newcommand{\vpl}{\varphi_\lambda}

\newcommand{\Fl}{F_{e, \lambda}}
\newcommand{\hl}{h_{e, \lambda}}
\newcommand{\Jl}{E_{e, \lambda}}

\newcommand{\Om}{\Omega_{p,q}(M)}

\title[Multiplicity via localized deformations]{Multiplicity of Finsler geodesics and electromagnetic trajectories via localized deformations}

\author[E. Caponio]{Erasmo Caponio}
\address{Dipartimento di Meccanica, Matematica e Management\hfill\break\indent
	Politecnico di Bari \hfill\break\indent Via Orabona 4,
	70125, Bari, Italy}
\email{erasmo.caponio@poliba.it}

\author[A.V. Germinario]{Anna Valeria Germinario}
\address{Dipartimento  di Matematica\hfill\break\indent
	Universit\`a degli Studi di Bari Aldo Moro\hfill\break\indent Via  Orabona 4,
	70125 Bari, Italy}
\email{anna.germinario@uniba.it}

\author[A. Masiello]{Antonio Masiello}
\address{Dipartimento di Meccanica, Matematica e Management\hfill\break\indent
	Politecnico di Bari \hfill\break\indent Via Orabona 4,
	70125, Bari, Italy}
\email{antonio.masiello@poliba.it}

\begin{document}
\begin{abstract}
Multiplicity of geodesics connecting two points is classically governed by the 	topology of the underlying manifold: the Lusternik--Schnirelmann category of	the path space provides lower bounds for the number of critical points of the	energy functional, and this topological input disappears when the manifold is	contractible.  We show that it can nevertheless be recovered through the	metric itself.  Given a Finsler manifold $(M,F)$ and suitable nested regions	$B_1\subset B_2\subset B_3$, with $M\setminus B_1$ connected and	non-contractible, as in the model case of concentric balls in $\mathbb R^n$,	$n\geq2$, we consider conformal deformations $\varphi_\lambda F$  which can be chosen so as to leave $F$ unchanged outside $B_3$ and grow on $B_2\setminus B_1$.  This growth	creates an  energy barrier, diverging  with $\lambda$, that	low-energy curves cannot cross.  The minimax families can therefore be chosen	in the path space of $M\setminus B_1$, which contains compact subsets of	arbitrarily large category.  Consequently, for every $m\in\mathbb N$ and all	sufficiently large $\lambda$, there exist at least $m$ geodesics joining two	prescribed points, all avoiding $B_1$.  This extends to the Finsler setting an	idea of Capozzi, Fortunato and Greco.
The same scheme applies to Randers metrics $\alpha+\beta$, with $\alpha$ 	deformed and $\beta$ unchanged, as well as  to fixed-energy trajectories of non-relativistic	electromagnetic systems and  relativistic Lorentz-force trajectories, and  light rays in  spacetimes endowed with a causal Killing field.  In the non-relativistic case only the scalar	potential is perturbed, while the magnetic field is preserved.  When the	Killing field is not everywhere timelike, the Fermat principle yields a	possibly singular Randers--Kropina metric on a spacelike hypersurface.  Under a natural admissible category condition, combining  localized barriers	with Randers approximation gives $m$ Randers--Kropina geodesics with pairwise	distinct lengths, equivalently $m$ future-pointing light rays with pairwise distinct arrival times.
\end{abstract}

\keywords{Finsler metric, Randers metric,  Kropina metric, Lusternik--Schnirelmann category, multiplicity of geodesics, electromagnetic trajectories, Lorentz force equation, causal Killing field, stationary spacetimes}
\subjclass[2020]{53C22, 53C50, 53C60, 58E05, 58E10, 70H33, 83C50}
\maketitle

\section{Introduction}
In many variational multiplicity results, the number of critical points is
controlled by the topology of the domain of the functional.  In the
Lusternik--Schnirelmann theory,  the  topological  richness is measured by the category, while the
Palais--Smale condition and the Deformation Lemma provide the analytic
tools which turn category estimates into estimates on the number of critical points.  
In geodesic problems this topological input is usually supplied by the topology
of the underlying manifold $M$.
For instance, if $p,q\in M$, the energy functional
of a Riemannian or Finsler metric is naturally defined on the Hilbert manifold
$\Omega_{p,q}(M)$ of $H^1$-curves joining $p$ to $q$.  Under the 
non-contractibility assumption on $M$, results of Fadell and Husseini imply that
$\Omega_{p,q}(M)$ contains compact subsets of arbitrarily large category. 

If $M$ has trivial topology, in particular on a contractible manifold, there is in general no topological reason for the existence of several geodesics joining two fixed
points.  The purpose of this paper is to show that this obstruction can be
bypassed by a localized deformation of the metric.  The deformation creates a
large energetic barrier around a compact region, so that all curves below a
prescribed energy level are forced to remain in the complement of that region.
Although the ambient manifold may be contractible, this complement can be chosen
to be non-contractible and to carry the  category needed for the
Lusternik--Schnirelmann argument.
 This idea was implemented in~\cite{cfg} in the Riemannian case: even on a
 manifold with trivial topology, for every positive integer $m$ one can modify
 the metric in a suitable localized region,  so as to obtain
 at least $m$ geodesics joining two prescribed points.

Here we extend this result in several directions.  First, we pass from the
Riemannian setting to the Finsler one, treating both conformal deformations of
general (nonreversible) Finsler metrics (Theorem~\ref{main}) and deformations of Randers metrics
$F=\alpha+\beta$ obtained by conformally changing the Riemannian term $\alpha$
while leaving the drift one-form $\beta$ unchanged (Theorem~\ref{mainr}).  Moreover, 
 we show in Section~\ref{R-K} that  the  deformation mechanism producing multiple geodesics can be extended  to a singular Finsler metric which is the pointwise limit of a family of Randers ones. 
Furthermore, an analogous localized perturbation argument  applies  also to non-relativistic 
electromagnetic trajectories with prescribed energy by a perturbation of the scalar potential alone (Theorem~\ref{mainmag}). In the last section we show some applications to  spacetime geometry.
The first concerns charged particles in a standard stationary spacetime endowed
with a stationary electromagnetic field.  For a fixed value of the conserved
energy, the spatial projections of the relativistic Lorentz-force trajectories
satisfy a non-relativistic electromagnetic equation and are, up to
reparametrization, geodesics of an associated Jacobi--Randers metric; see
\cite{cgm}.  The localized deformation of the scalar potential considered
above can then be realized by changing only the scalar coefficient $\phi$ in
the stationary potential vector field $W=\phi\partial_t+A$, while leaving the
spacetime metric, the spatial component $A$, and hence the reduced magnetic
field unchanged.  For the strictly localized data introduced below, the
stationary electromagnetic potential itself is unchanged outside the prescribed
region.  This gives prescribed finite multiplicity for relativistic
Lorentz-force trajectories even when the spatial manifold has trivial topology.
The second application concerns light rays in a standard stationary spacetime.
By the Fermat principle, their spatial projections are pregeodesics of a
Randers metric \cite{cjm,cjs}.  We show that the
localized Finsler deformations introduced above admit natural realizations as
localized deformations of the stationary metric and therefore yield
multiplicity of future- and past-pointing light rays.  In particular, we obtain
a deformation which changes only the spacelike Riemannian metric $g_0$, while
the lapse function $\Lambda$ and the shift one-form $\omega$ are kept fixed.
The third application extends this construction to spacetimes whose 
Killing vector field is  causal.  This is connected to the singular limits of Randers metrics  mentioned above. 
Indeed, the optical geometry of this class of spacetimes is described by
a Randers--Kropina metric: it is Randers where the Killing field is timelike and
becomes Kropina (which is a singular Finsler metric) where the Killing field is lightlike.  This correspondence, as
well as its interpretation in  Zermelo navigation, is developed in
\cite{cjswind}; existence and multiplicity questions for Kropina and
Randers--Kropina geodesics were studied in
\cite{cgmskropina,cjmRK,cjmRKcorr}, while the underlying control-theoretic and
causal-Killing frameworks appear in \cite{cms,bartolocaponio}.  By combining
the methods of those works with the localized
barrier used here, we obtain any prescribed finite number of connecting
Randers--Kropina geodesics, even when the  spacetime itself has trivial
topology, provided that the required non-trivial topology is carried by an
admissible family of curves outside the perturbed region.
There is another  geometric motivation to consider singular Finsler metrics as the Kropina ones, coming  from CR geometry.  The Fefferman metric
admits a distinguished null Killing vector field, namely the infinitesimal
generator of the circle action on the Fefferman bundle.  The chains of a
non-degenerate CR structure are the projections of the null geodesics
transverse to the circle fibers and are locally the geodesics of an associated
Kropina metric \cite{chengchains}.  A survey of the relevant pseudohermitian
and Fefferman geometry, including the earlier proposal to develop a
variational theory of chains, can be found in \cite{dragomir}.  The deformation
considered below produces geodesics of the deformed Kropina metric; these can
be interpreted as chains only if that metric arises from the Fefferman
circle bundle associated with a non-degenerate CR structure.

\section{Localized deformation data and first multiplicity results}

Let $(M,F)$ be a Finsler manifold. We refer to \cite{bcs} for the background material and the properties of Finsler geometry used throughout the paper.

Let  $B_1, B_2$, $B_3$ be subsets of $M$ such that 
\begin{enumerate}[label=(\roman*)]
\item $B_1$ is closed, $B_2$ and $B_3$ are open, $B_1 \subset B_2 \subset B_3$, with $\overline{B_2}\subset B_3$, and
\begin{equation} \label{dab}
\dist(B_1,\partial B_2) > 0,
\end{equation} 
where $\dist$ denotes the (possibly non-symmetric) distance on $M$ induced by $F$ and we set $\dist(B_1,\partial B_2):=\inf\{\dist(x_1,x_2)\mid x_1\in B_1,\ x_2\in \partial B_2\}$;
\item $M\setminus B_1$ is connected and non-contractible, and
$M\setminus B_3$ is a strong deformation retract of $M\setminus B_1$ through a
homotopy
\[
R:(M\setminus B_1)\times[0,1]\to M\setminus B_1
\]
such that each map $R_s:=R(\cdot,s)$ is locally Lipschitz,\footnote{In (ii), local Lipschitz regularity is understood with respect to the
distance of an auxiliary Riemannian metric on $M$. This notion is independent
of the auxiliary metric, since any two Riemannian distances are locally
bi-Lipschitz equivalent. The same auxiliary metric is used to endow the path
space $\Omega_{p,q}(X)$, used in Section~\ref{sectionproofmain}, with its
standard $H^1$ Hilbert-manifold structure.
Condition~\eqref{dab}, however, is stated using the forward distance
induced by the possibly non-reversible Finsler metric $F$.
This is the distance entering the barrier estimate in Lemma~\ref{cl}. If
$\overline{B_2}$ is compact, any Riemannian distance can be used, since on
compact subsets $F$ is uniformly equivalent to any Riemannian norm and $B_1$
and $\partial B_2$ are disjoint compact sets. For non-compact
$\overline{B_2}$, the explicit $F$-distance assumption is essential,
especially when Theorem~\ref{main} is read as a statement about the whole
family $(\varphi_\lambda F)_{\lambda>0}$, fixed independently of the endpoints
$p,q\in M\setminus B_3$.} locally uniformly with respect to $s\in[0,1]$,
and, for all $p, q\in M\setminus B_3$, the induced map
\[
\mathcal R:\Omega_{p,q}(M\setminus B_1)\times[0,1]
\longrightarrow\Omega_{p,q}(M\setminus B_1),
\qquad
\mathcal R(\gamma,s)=R_s\circ\gamma,
\]
is continuous in the $H^1$ topology. Its time-one map is denoted by
$r:=R(\cdot,1)$, so that $r\colon M\setminus B_1\to M\setminus B_3$;
\item there exists a family $(\vpl)_{\lambda>0}$ of smooth functions $\vpl :M  \rightarrow (0, +\infty) $ such that, setting
\begin{equation}\label{infsup}
I_\lambda := \inf_{x \in B_2\setminus B_1} \vpl(x), \qquad S_\lambda :=  \sup_{x \in M\setminus B_3} \vpl(x),
\end{equation}
we have
\begin{equation} \label{hp} 
\lim_{\lambda \rightarrow + \infty} I_\lambda = +\infty \qquad\text{and}\qquad  S  := \sup_{\lambda >0} S_\lambda \in [1,+\infty).
\end{equation}
\item the functions $\vpl$ are normalized from below by
\begin{equation} \label{lowerphi}
\inf_{x\in M}\vpl(x)\geq 1 \qquad\text{for every }\lambda>0.
\end{equation}
\end{enumerate}
\begin{definition}\label{adapted-datum}
A tuple
\[
B_1\subset B_2\subset B_3,\qquad (\varphi_\lambda)_{\lambda>0},
\]
satisfying assumptions {\rm (i)}--{\rm (iv)} will be called a
\emph{localized deformation datum}.  It is called \emph{strictly localized}
if, in addition,
\[
\varphi_\lambda\equiv1\qquad\text{on }M\setminus B_3
\quad\text{for every }\lambda>0.
\]
Given $p,q\in M$, such a datum is said to be \emph{adapted to $p$ and $q$}
if
\[
p,q\in M\setminus B_3.
\]
\end{definition}
\begin{remark}
For a general localized deformation datum, assumptions~{\rm (iii)}-{\rm(iv)} do not
assert that the deformation is neutralized  there.  Statements saying that the metric,
potential, or equation of motion is unchanged outside $B_3$ therefore refer to
strictly localized data.  Remark~\ref{exloc} provides such data.
\end{remark}
 \begin{remark}\label{exloc}
 The smooth functions $\vpl$ satisfying {\rm(iii)} and {\rm(iv)} will be  	used to produce different perturbations of various Finsler metrics, and  	they can be chosen so as to modify the associated dynamical system only  	in an arbitrarily small region. In fact, the whole set of data in  	{\rm(i)}--{\rm(iv)} can be realized explicitly inside a single
 coordinate chart, provided that the complement of the resulting set  	$B_1$ is connected and non-contractible.
 	
 Fix a point $x_0\in M$ and a chart  	$\xi\colon U\to\R^n$, $n=\dim M$, with $\xi(x_0)=0$ and such that  	$\xi(U)$ contains the closed Euclidean ball of some radius $\rho_3>0$  	centered at the origin. We identify $U$ with  	$\xi(U)\subseteq\R^n$ and write $|x|:=|\xi(x)|$ for $x\in U$.  	Fix radii $	0\leq \rho_0<\rho_1<\rho_2<\rho_3$ 	and define $A_i:=\{x\in U:|x|<\rho_i\}$, $i\in\{0,1,2,3\}$. Set $B_0=A_0$, $B_1=\overline{A_1}$,  $B_2=A_2$, $B_3=A_3$,
 so that $	\dist(B_1,\partial B_2)>0$, and {\rm(i)} holds.
 	
 Assume now that $M\setminus B_1$ is connected and non-contractible. 	This is the case, for instance, when $M=\R^n$ with $n\geq2$, which is 	the prototypical situation of a contractible manifold to which our 	results apply. Define
 \[
 R\colon(M\setminus B_1)\times[0,1]\longrightarrow M\setminus B_1, 
 \]
 by
  \[
 R(x,s):=\begin{cases}
 x&x\in M\setminus U \text{ or } |x|\geq \rho_3\\
 \bigl((1-s)|x|+s\rho_3\bigr)\frac{x}{|x|}&	\rho_1<|x|<\rho_3.\end{cases}
 \]
 Then $R$ is continuous, $R(\cdot,0)=\operatorname{id}$, and  	$R(\cdot,s)$ fixes $M\setminus B_3$ pointwise for every 	$s\in[0,1]$. Moreover, $R\bigl(M\setminus B_1,1\bigr)\subset M\setminus B_3$.
 Thus, setting $r:=R(\cdot,1)$, the set $M\setminus B_3$ is a strong
 deformation retract of $M\setminus B_1$. 	
  The maps $R_s:=R(\cdot,s)$ are locally Lipschitz on
 $M\setminus B_1$, locally uniformly with respect to $s\in[0,1]$.
 Moreover, in the coordinates of the chart they can be written as
 \[
 R_s(y)
 =
 \left(|y|+s(\rho_3-|y|)_+\right)\frac{y}{|y|},
 \qquad |y|>\rho_1,
 \]
 where $(a)_+:=\max\{a,0\}$.
 This representation also gives directly the required continuity on the
 $H^1$ path space.  Indeed, the maps $u\to |u|$ and $u\to \frac{u}{|u|}$
 are continuous in $H^1$ on $\{|u|>\rho_1\}$, and so is  the  truncation
 $v\to(\rho_3-v)_+$.    Since multiplication
 \[
 H^1([0,1])\times H^1([0,1],\mathbb R^n)
 \longrightarrow H^1([0,1],\mathbb R^n)
 \]
 is continuous, the above representation implies the continuity of
 $(u,s)\mapsto R_s\circ u$ in the $H^1$ topology and assumption~{\rm(ii)}
 is satisfied (cf. also \cite{musina}).
%
%
%
Finally, choose a smooth function $\eta\colon[0,+\infty)\longrightarrow[0,1]$ which vanishes on a neighborhood of $[0,\rho_0]$, is identically one  	on $[\rho_1,\rho_2]$, and vanishes on $[\rho_3,+\infty)$. Set
 \[
 \vpl(x):=
 \begin{cases}
 	1+\lambda\,\eta(|x|),&x\in U,\\[2pt]
	1,&x\in M\setminus U.
\end{cases}
\]
Since $\eta$ vanishes near $0$ and on $[\rho_3,+\infty)$, each  	$\vpl$ is smooth on $M$. Moreover, it equals $1$ on $B_0$ and on  	$M\setminus B_3$, and it equals $1+\lambda$ on  	$B_2\setminus B_1$. Therefore,
 $ I_\lambda=1+\lambda$, 	$S_\lambda=1$, 	$\vpl\geq1$
 for every $\lambda>0$. Thus 	{\rm(iii)}--{\rm(iv)} are satisfied.
 	
 The resulting datum is strictly localized. In particular, in all the  	theorems below, the perturbation of the corresponding coefficients,  	and hence of the associated equation of motion, is supported in the  	spherical shell
 $ \overline{B_3}\setminus B_0 = 	\{x\in U:\rho_0\leq|x|\leq\rho_3\}$,
 and the equation remains unchanged on  	$(M\setminus B_3)\cup B_0$. Moreover, if the endpoints $p,q\in M$ are prescribed  	in advance, the chart and the radii can be chosen so that
 $p,q\notin B_3$. In that case, the above localized deformation datum
 is adapted to $p$ and $q$ in the sense of
 Definition~\ref{adapted-datum}.
 \end{remark}

As a first application of the deformation data,  we consider  the perturbed  Finsler metrics on $M$, conformal to $F$, defined by $  \vpl F$.
 \begin{remark} \label{compl}
If  $(M,F)$ is forward (resp.\ backward) complete and \eqref{lowerphi} holds, then  the manifold $(M, \vpl F)$ is forward (resp.\ backward) complete as well. Indeed, the $\vpl F$-length of any curve is at least the $F$-length of the same curve, hence every  curve having infinite $F$-length has infinite $\vpl F$-length as well.
\end{remark}
The following multiplicity result for geodesics in $(M, \vpl F)$ holds.
\begin{theorem} \label{main}
Let $(M, F)$ be a (forward or backward) complete Finsler manifold.
Let $p,q\in M$ and let $B_1\subset B_2\subset B_3$, $(\varphi_\lambda)_{\lambda>0}$,
be a localized deformation datum adapted to $p$ and $q$ in the sense of Definition~\ref{adapted-datum}. Then, for every $m\in\mathbb N$, there exists $\lambda_0=\lambda_0(p,q,m)$ such that, for every $\lambda>\lambda_0$, at least $m$ geodesics in $(M,\vpl F)$ joining $p$ to $q$ and contained in $M\setminus B_1$ exist.
\end{theorem}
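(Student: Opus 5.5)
The plan is to apply Lusternik--Schnirelmann theory to the energy functional
\[
E_\lambda(\gamma)=\frac12\int_0^1 \vpl^2(\gamma)\,F^2(\gamma,\dot\gamma)\,ds
\]
on $\Omega_{p,q}(M)$, which is $C^{1}$ (with locally Lipschitz differential). By Remark~\ref{compl} it satisfies the Palais--Smale condition on every sublevel, since $(M,\vpl F)$ is forward or backward complete. The strategy is to find, for each $m$, a compact set $K_m\subset\Omega_{p,q}(M\setminus B_3)$ with $\cat_{\Omega_{p,q}(M\setminus B_1)}(K_m)\ge m$. Such sets exist because $M\setminus B_1$ is connected and non-contractible (Fadell--Husseini). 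One subtlety is that the category should be computed in a path space of a retract. Using the deformation $\mathcal R$ of (ii), $\Omega_{p,q}(M\setminus B_3)$ is a deformation retract of $\Omega_{p,q}(M\setminus B_1)$. Hence we may take $K_m:=\mathcal R(K_m',1)$ for a compact $K_m'$ of large category in $\Omega_{p,q}(M\setminus B_1)$; it has the same category there, because $\mathcal R(\cdot,1)$ is homotopic to the identity.

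The key energy bound is as follows. On $M\setminus B_3$ we have $\vpl\le S$, so
\[
\sup_{K_m}E_\lambda\le S^2\sup_{K_m}E_F=:c_m,
\]
and $c_m$ is independent of $\lambda$. The barrier estimate (the role of Lemma~\ref{cl}) states that any curve starting at $p\notin B_3$ and entering $B_1$ must cross $B_2\setminus B_1$, and there it travels an $F$-distance at least $d:=\dist(B_1,\partial B_2)>0$. The orientation issue for nonreversible $F$ is handled by considering the first entry into $B_1$: the segment from the last exit of $\overline{B_2}$ runs from $\partial B_2$ to $B_1$, so one uses the appropriate direction of $\dist$, or the assumption as stated. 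On that piece $\vpl\ge I_\lambda$, so its $\vpl F$-length is at least $I_\lambda d$. Cauchy--Schwarz then gives
\[
E_\lambda(\gamma)\ge \tfrac12 I_\lambda^2 d^2 .
\]
Choose $\lambda_0$ so that $\tfrac12 I_\lambda^2 d^2>c_m+1$ for $\lambda>\lambda_0$. It follows that the sublevel $E_\lambda^{c_m+1}$ consists of curves in $M\setminus B_1$, and in fact in the open set $\Omega_{p,q}(M\setminus B_1)$ of the path space.

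We then run the minimax
\[
c_k=\inf\Bigl\{\sup_A E_\lambda : A\subset \Omega_{p,q}(M\setminus B_1)\text{ compact},\ \cat_{\Omega_{p,q}(M\setminus B_1)}A\ge k\Bigr\},\qquad k\le m,
\]
so that $c_k\le c_m<\tfrac12 I_\lambda^2d^2$. The deformation lemma is needed inside the sublevel. Since $E_\lambda^{c}$ with $c<\tfrac12I_\lambda^2d^2$ lies in $\Omega_{p,q}(M\setminus B_1)$, and the negative pseudo-gradient flow decreases $E_\lambda$, the flow from such a sublevel stays in it. Hence it remains in $\Omega_{p,q}(M\setminus B_1)$, where the category is computed, and the standard argument applies: each $c_k$ is critical, and if $c_k=c_{k+1}$ the critical set at that level has positive category, hence is infinite. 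This yields at least $m$ critical points of $E_\lambda$ in $M\setminus B_1$, i.e.\ geodesics of $\vpl F$ from $p$ to $q$. The step I expect to be the main obstacle is making the category bookkeeping rigorous. This involves the relative category in the non-complete open subset $\Omega_{p,q}(M\setminus B_1)$, and checking that the flow and the Palais--Smale condition there behave as in the complete case. The barrier is what makes this work, because critical sequences below $c_m+1$ cannot approach $\partial B_1$.
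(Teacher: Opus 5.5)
Your proposal follows the paper's proof essentially step by step, including its resolution of what you call the main obstacle:
\begin{itemize}
\item you push a compact set of large category from $\Omega_{p,q}(M\setminus B_1)$ into $\Omega_{p,q}(M\setminus B_3)$ with $\mathcal R(\cdot,1)$;
\item you bound the deformed energy there by $S^2$ times the $F$-energy, uniformly in $\lambda$;
\item you use a barrier of order $I_\lambda^2$, so that every sublevel below it lies in the open set $\Omega_{p,q}(M\setminus B_1)$;
\item you run the minimax for the category relative to $\Omega_{p,q}(M\setminus B_1)$, with the deformation lemma of the complete functional on $\Omega_{p,q}(M)$.
\end{itemize}

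One step needs correcting: the orientation in the barrier estimate. The segment you choose ends at the first entry into $B_1$ and starts at the preceding visit to $\partial B_2$. It therefore runs from $\partial B_2$ to $B_1$, and its $F$-length is only bounded below by $\inf\{\dist(x_2,x_1): x_2\in\partial B_2,\ x_1\in B_1\}$. That is the reverse of the quantity $\dist(B_1,\partial B_2)$ assumed positive in \eqref{dab}. For nonreversible $F$ and non-compact $\overline{B_2}$, which the theorem allows, the two need not be comparable. So, as written, your lower bound is not justified by the hypotheses.

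The fix is the one the paper uses: take an exit segment instead. Since the curve must also reach $q\notin B_3$, there is $[s_0,s_1]$ with $x(s_0)\in B_1$, $x(s_1)\in\partial B_2$ and $x((s_0,s_1))\subset B_2\setminus B_1$. Its $F$-length is at least $\dist(B_1,\partial B_2)$, and your Cauchy--Schwarz estimate then goes through unchanged.

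Two smaller inaccuracies do not affect the argument.
\begin{itemize}
\item $\mathcal R(K,1)$ need not have the same category as $K$. Pulling back a covering, as you indicate, gives only $\cat_{\Omega_{p,q}(M\setminus B_1)}(\mathcal R(K,1))\ge\cat_{\Omega_{p,q}(M\setminus B_1)}(K)$, which is all that is needed.
\item Coinciding levels $c_k=c_{k+j}$ give a critical set of category at least $j+1$, hence at least $j+1$ points, but not necessarily infinitely many. The reason is that $\Omega_{p,q}(M\setminus B_1)$ need not be path-connected, so a finite set can have category larger than one. This still suffices for counting $m$ critical points.
\end{itemize}
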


Our second result concerns Randers metrics. We use the standard notation
\begin{equation} \label{randers}
F=\alpha+\beta,
\qquad
\alpha_x(y):=\sqrt{h_x(y,y)},
\end{equation}
where $h$ is a Riemannian metric on $M$ and $\beta$ is a one-form. This defines a Finsler metric if, for every $x\in M$,
\begin{equation*}
\|\beta\|_{\alpha,x}:=\sup_{v\in T_xM\setminus\{0\}}
\frac{|\beta_x(v)|}{\alpha_x(v)}<1.
\end{equation*}

In the next result, we perturb only the Riemannian metric $h$, equivalently only the Riemannian part $\alpha$ in \eqref{randers}, defining
\begin{equation} \label{hl}
h_\lambda = \vpl h,
\end{equation}
where $\vpl$ satisfies assumptions (iii)--(iv). The corresponding metric is
\begin{equation} \label{pranders}
F_\lambda=\alpha_\lambda+\beta,
\qquad
\alpha_{\lambda,x}(y):=\sqrt{h_\lambda|_x(y,y)}.
\end{equation}
It is again a Randers metric by \eqref{lowerphi}. Indeed,
\[
\|\beta\|_{\alpha_\lambda,x}
:=\sup_{v\in T_xM\setminus\{0\}}
\frac{|\beta_x(v)|}{\alpha_{\lambda,x}(v)}
\leq \|\beta\|_{\alpha,x}<1 .
\]

\begin{remark} \label{rcompl}
If the stronger condition
\begin{equation}\label{norm}
\|\beta\|_\alpha := \sup_{x\in M}\|\beta\|_{\alpha,x} < 1
\end{equation}
holds   and the Riemannian manifold $(M,h)$ is complete, then $(M,F)$ is forward and backward complete as well. Moreover, the same conclusion holds for $(M,F_\lambda)$ with $F_\lambda$ as in \eqref{pranders} whenever \eqref{lowerphi} holds.  Indeed,
\begin{equation*}
\|\beta\|_{\alpha_\lambda} := \sup_{x\in M} \|\beta\|_{\alpha_\lambda,x}
\leq \|\beta\|_\alpha < 1.
\end{equation*}
\end{remark}

\begin{theorem} \label{mainr}
Let $(M,F=\alpha+\beta)$ be a Randers manifold as in \eqref{randers}, where $(M,h)$ is complete and \eqref{norm} holds. Let $p,q\in M$ and let $B_1\subset B_2\subset B_3$, $(\varphi_\lambda)_{\lambda>0}$, be a localized deformation datum adapted to $p$ and $q$ in the sense of Definition~\ref{adapted-datum}. Let $F_\lambda$ be defined as in \eqref{hl}, \eqref{pranders}. Then, for every $m\in\N$, there exists $\lambda_0=\lambda_0(p,q,m)$ such that, for every $\lambda>\lambda_0$,  at least $m$ geodesics in $(M,F_\lambda)$ joining $p$ to $q$ and contained in $M\setminus B_1$ exist.
\end{theorem}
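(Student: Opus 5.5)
We will follow the scheme of the proof of Theorem~\ref{main}, applied to the energy functional of the Randers metric,
\[
E_\lambda(\gamma)=\frac12\int_0^1 F_\lambda^2(\gamma,\dot\gamma)\,ds,
\]
on the Hilbert manifold $\Omega_{p,q}(M)$. Remark~\ref{rcompl} shows that $(M,F_\lambda)$ is forward and backward complete. Hence $E_\lambda$ is $C^{1}$, its critical points are the geodesics of $F_\lambda$ joining $p$ to $q$, and it satisfies the Palais--Smale condition on sublevels, by the standard results for complete Finsler (Randers) metrics. We therefore only need the two ingredients that are specific to the deformation: a barrier estimate and a uniform upper bound on the minimax levels.

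\emph{Comparison with a conformal metric.} Set $k:=\|\beta\|_\alpha<1$. Pointwise we have
\[
(1-k)\alpha_\lambda\le\alpha_\lambda-\|\beta\|_{\alpha_\lambda}\alpha_\lambda\le F_\lambda\le(1+k)\alpha_\lambda ,
\]
since $\|\beta\|_{\alpha_\lambda,x}\le\|\beta\|_{\alpha,x}\le k$. On $M\setminus B_3$, where $\alpha_\lambda=\sqrt{\vpl}\,\alpha\le\sqrt S\,\alpha$, this gives
\[
F_\lambda\le(1+k)\sqrt S\,\alpha
\]
independently of $\lambda$. On $B_2\setminus B_1$ it gives
\[
F_\lambda\ge(1-k)\sqrt{I_\lambda}\,\alpha\ge\frac{1-k}{1+k}\sqrt{I_\lambda}\,F .
\]
Thus $F_\lambda$ is squeezed between constant multiples of $\sqrt{\vpl}\,\alpha$, with the conformal factor $\sqrt{\vpl}$ playing the role of $\vpl$ in Theorem~\ref{main}.

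\emph{Barrier.} Let $\gamma\in\Omega_{p,q}(M)$ meet $B_1$. Since $p\notin B_3\supset\overline{B_2}$, the curve $\gamma$ must travel from $\partial B_2$ to $B_1$ inside $B_2\setminus B_1$ (the last point on $\partial B_2$ before the first hit of $B_1$). Combining the lower bound above with the argument of Lemma~\ref{cl} on that subarc, we obtain
\[
E_\lambda(\gamma)\ge\frac12 L_{F_\lambda}(\gamma)^2\ge \frac12\Bigl(\frac{1-k}{1+k}\Bigr)^2 I_\lambda\, d^2,
\]
where $d$ is the relevant $F$-distance between $\partial B_2$ and $B_1$, taken in the direction the curve moves. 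The hypothesis~\eqref{dab} controls only $\dist(B_1,\partial B_2)$. The direction reversal is handled by using the exit arc, from $B_1$ back to $\partial B_2$, since $q\notin B_3$ as well. This gives the bound with $\dist(B_1,\partial B_2)$. The right-hand side diverges as $\lambda\to\infty$.

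\emph{Minimax levels.} Fix $m$. By the Fadell--Husseini result, $\Omega_{p,q}(M\setminus B_1)$ contains a compact set $K$ with $\cat_{\Omega_{p,q}(M\setminus B_1)}K\ge m$. Put $K'=\mathcal R(K,1)$. Then $K'\subset\Omega_{p,q}(M\setminus B_3)$ is compact by assumption~(ii), and it is homotopic to $K$ within $\Omega_{p,q}(M\setminus B_1)$, so $\cat K'\ge m$. On $K'$ we have
\[
E_\lambda\le\frac12(1+k)^2S\max_{K'}\int_0^1\alpha^2(\dot\gamma)\,ds=:c_m ,
\]
which does not depend on $\lambda$. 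Choose $\lambda_0$ so that the barrier exceeds $c_m$ for $\lambda>\lambda_0$. Then the sublevel $E_\lambda^{c_m}$ lies in the open set $\Omega_{p,q}(M\setminus B_1)$, which is invariant under the negative gradient flow below level $c_m$. The Lusternik--Schnirelmann minimax values
\[
c_j=\inf_{\cat A\ge j}\sup_A E_\lambda,\qquad j=1,\dots,m,
\]
taken over compact $A\subset\Omega_{p,q}(M\setminus B_1)$, are then at most $c_m$. Using Palais--Smale and the deformation lemma inside this set, they produce at least $m$ critical points, that is, $m$ geodesics in $M\setminus B_1$.

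The main obstacle is the barrier step for the nonreversible $F_\lambda$. One must check that the lower bound on the lengths uses the forward distance in the direction assumed in~\eqref{dab}, and that Palais--Smale holds on the open subset rather than on all of $\Omega_{p,q}(M)$. The first point is handled as above via the exit arc; completeness is symmetric here thanks to~\eqref{norm}. For the second, we would first try to invoke Palais--Smale on the whole complete space, and then use the fact that $E_\lambda^{c_m}$ is closed and lies at positive distance from curves touching $B_1$.
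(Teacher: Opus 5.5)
Your proposal is correct and follows essentially the same route as the paper. The barrier comes from the exit arc of a curve from $B_1$ to $\partial B_2$, which is exactly how Lemma~\ref{cl} uses the forward distance in \eqref{dab}. The upper bound is obtained on the retracted compact set $K'=\mathcal R(K,1)\subset\Omega_{p,q}(M\setminus B_3)$ using $\varphi_\lambda\le S$ there. The minimax is run in $\Omega_{p,q}(M\setminus B_1)$, with Palais--Smale taken from the completeness of $F_\lambda$. Only the constants differ: you compare with $(1-k)\sqrt{I_\lambda}\,\alpha$ and the $F$-length, while the paper uses $(\sqrt{I_\lambda}-\|\beta\|_\alpha)\sqrt{h}$ and $\dist_h$.

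Your closing ``positive distance'' remark is unnecessary. Palais--Smale on all of $\Omega_{p,q}(M)$, together with the fact that the deformations do not raise the energy, already keeps every sublevel below the barrier inside $\Omega_{p,q}(M\setminus B_1)$, and this is all the paper uses.
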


Our third result concerns a classical problem closely related to geodesics 
of Randers metrics. More precisely, we consider the motion of a particle on a connected Riemannian manifold $(M,h)$, endowed with its Levi-Civita connection $\nabla$, under the combined action of a potential $V : M \to \R$ and a magnetic field described by an exact two-form $B=d\beta$ on $M$, where $\beta$ is a one-form. It is well known that the corresponding trajectories satisfy the equation
\begin{equation} \label{mag}
\nabla_{\dot{x}}\dot{x} + \nabla V(x) = -\big(\iota_{\dot x} B\big)^\sharp,
\end{equation}
where $\nabla V$ denotes the gradient of $V$ with respect to $h$, 
$\iota_{\dot x}B := B(\dot x,\cdot)$, and ${}^\sharp:T^*M\to TM$ is the musical isomorphism induced by the metric $h$.

Since the potential is time-independent and $B$ is antisymmetric, the total energy
\begin{equation} \label{toten}
e = \frac{1}{2} h(\dot{x},\dot{x}) + V(x)
\end{equation}
is conserved along every solution $x(s)$ of \eqref{mag}. Solutions of \eqref{mag} connecting two points $p,q\in M$ with prescribed total energy $e$ are naturally related to the Jacobi--Randers metric
\begin{equation}\label{finjac1}
F_e=\alpha_e+\beta,
\qquad
\alpha_{e,x}(y):=\sqrt{h_e|_x(y,y)},
\end{equation}
where
\begin{equation}\label{jac}
h_e:=2(e-V)\,h .
\end{equation}
If $(M,h)$ is complete, then $(M,F_e)$ is a forward and backward complete
Randers manifold provided
\begin{equation} \label{fincond1}
e>\sup_M V
\quad\text{and}\quad
\|\beta\|_{\alpha_e} :=\sup_{v\in TM\setminus\{0\}}
\frac{|\beta(v)|}{\alpha_e(v)}<1. 
\end{equation}
 Indeed, since $e>\sup_M V$, the Jacobi metric $h_e=2(e-V)h$ is bounded below
 by a positive multiple of $h$, hence it is complete whenever $(M,h)$ is
 complete. Moreover, the second condition in \eqref{fincond1} implies both  $F_e$ is really a Finsler metric and its completeness (see \cite[Remark 4.1]{cjm}).

A curve $x$ solves \eqref{mag} with constant energy \eqref{toten} equal to $e$ if and only if it is a pregeodesic of the Randers metric $F_e$ in \eqref{finjac1} (see, e.g., \cite[Appendix A]{cgm}). In the special case $B=0$, this reduces to the classical Maupertuis--Jacobi principle with  the Jacobi metric \eqref{jac}.

Under assumption \eqref{fincond1}, if $M$ is non-contractible, there exist infinitely many trajectories with energy $e$ (see \cite[Proposition 3.1]{cjm}). Here we show that, even without any topological assumption on $M$, a multiplicity result analogous to Theorems \ref{main} and \ref{mainr} can still be obtained by an additive perturbation of the potential. More precisely, for functions $\varphi_\lambda$ as in \eqref{infsup}, \eqref{hp} and \eqref{lowerphi}, we consider
\begin{equation} \label{vl}
V_\lambda := V-\varphi_\lambda +1.
\end{equation}
Then $e-V_\lambda=e-1-V+\varphi_\lambda$,
so the Jacobi conformal factor is increased in the region where $\varphi_\lambda$ is large. 
\begin{theorem} \label{mainmag}
Let $(M,h)$ be a complete Riemannian manifold. Let $V:M\to\R$ be a smooth function such that
\begin{equation} \label{supV}
\overline{V}:=\sup_M V(x)<+\infty
\end{equation}
and let $B$ be an exact two-form on $M$, $B=d\beta$. Assume that \eqref{fincond1} holds for some  $e>\overline V$. Let $p,q\in M$, $p\neq q$, and let $B_1\subset B_2\subset B_3$, $(\varphi_\lambda)_{\lambda>0}$, be a localized deformation datum adapted to $p$ and $q$, with condition \eqref{dab} interpreted using the Riemannian distance $\dist_h$ induced by $h$.
For every $\lambda>0$, let $V_\lambda$ be defined by \eqref{vl}.
Then, for every $m\in\N$, there exists $\lambda_0=\lambda_0(p,q,e,m)$ such that, for every $\lambda>\lambda_0$,  at least $m$ solutions of
\begin{equation} \label{permag}
\nabla_{\dot{x}}\dot{x} + \nabla V_\lambda(x) = -\big(\iota_{\dot x} B\big)^\sharp,
\end{equation} 
joining $p$ to $q$, having energy $e$ and contained in $M\setminus B_1$, exist.
\end{theorem}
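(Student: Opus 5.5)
We reduce Theorem~\ref{mainmag} to Theorem~\ref{mainr}, or more precisely to the argument behind it, by passing to the Jacobi--Randers metric of the perturbed potential. Since $e-V_\lambda=e-1-V+\varphi_\lambda$, the perturbed Jacobi metric is
\[
h_{e,\lambda}:=2(e-V_\lambda)h=2(e-1-V+\varphi_\lambda)h .
\]
The relevant Finsler metric is $F_{e,\lambda}=\alpha_{e,\lambda}+\beta$, with $\alpha_{e,\lambda}=\sqrt{h_{e,\lambda}}$. By the correspondence recalled after \eqref{fincond1} (see \cite[Appendix A]{cgm}), solutions of \eqref{permag} with energy $e$ joining $p$ to $q$ are exactly the reparametrized $F_{e,\lambda}$-pregeodesics joining $p$ to $q$. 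It therefore suffices to find $m$ geodesics of $F_{e,\lambda}$ from $p$ to $q$ inside $M\setminus B_1$. The hypothesis $p\neq q$ makes these curves nonconstant, so they reparametrize to genuine energy-$e$ trajectories.

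The first step is to rewrite $h_{e,\lambda}$ as a conformal deformation of $h_e$, so that the hypotheses of Theorem~\ref{mainr} can be checked. Set
\[
\psi_\lambda:=\frac{e-1-V+\varphi_\lambda}{e-V},
\]
so that $h_{e,\lambda}=\psi_\lambda h_e$. Since $\varphi_\lambda\ge1$, we get $\psi_\lambda\ge1$, which is (iv). For (iii), use $0<e-\overline V\le e-V$ and $\varphi_\lambda\le S$ on $M\setminus B_3$ to get
\[
\psi_\lambda\le 1+\frac{S-1}{e-\overline V}\quad\text{on } M\setminus B_3 .
\]
On $B_2\setminus B_1$ we have $e-V\le e-\inf V$. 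If $\inf V>-\infty$, this gives $\psi_\lambda\ge 1+(I_\lambda-1)/(e-\inf V)\to\infty$. Since $V$ may be unbounded below, I would avoid relying on this.

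Instead I would repeat the proof of Theorem~\ref{mainr} directly for $F_{e,\lambda}$, using the barrier lemma (Lemma~\ref{cl}) in its natural form. The curves are low-energy curves in the path space $\Omega_{p,q}(M)$, for the $F_{e,\lambda}$ energy functional. Lower bound on the barrier: any curve from $p\in M\setminus B_3$ that enters $B_1$ must cross $B_2\setminus B_1$, covering $h$-distance at least $\dist_h(B_1,\partial B_2)>0$. This is why \eqref{dab} is read with $\dist_h$. On that stretch
\[
\alpha_{e,\lambda}\ge\sqrt{2(e-1-\overline V+I_\lambda)}\,\alpha
\quad\text{and}\quad
\beta\ge-\|\beta\|_{\alpha_e}\,\alpha_e\ge-\|\beta\|_{\alpha_e}\,\alpha_{e,\lambda},
\]
so $F_{e,\lambda}\ge(1-\|\beta\|_{\alpha_e})\alpha_{e,\lambda}$. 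Hence the $F_{e,\lambda}$-length of such a curve, and therefore its energy, is at least $c\sqrt{I_\lambda}\to\infty$, where $c$ is independent of $\lambda$. Completeness of $F_{e,\lambda}$ in both directions and the Palais--Smale condition follow as in Remark~\ref{rcompl}, since $\alpha_{e,\lambda}\ge\alpha_e$ and $\|\beta\|_{\alpha_{e,\lambda}}\le\|\beta\|_{\alpha_e}<1$.

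Upper bound on the minimax levels: by Fadell--Husseini, $\Omega_{p,q}(M\setminus B_1)$ contains compact sets $K_m$ of category at least $m$. Push them by $r$ from (ii) into $\Omega_{p,q}(M\setminus B_3)$. On $M\setminus B_3$ we have $F_{e,\lambda}\le\sqrt{\sup\psi_\lambda}\,\alpha_e+|\beta|$, bounded uniformly in $\lambda$ by the estimate above. So $\sup_{r(K_m)}$ of the energy is bounded independently of $\lambda$, and $r(K_m)$ still has category at least $m$ in $\Omega_{p,q}(M\setminus B_1)$ via the homotopy $\mathcal R$.

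Conclusion and main obstacle: choose $\lambda_0$ so that the barrier exceeds this bound. The sublevel set of the energy below the barrier lies in $\Omega_{p,q}(M\setminus B_1)$ and is open there, and it is invariant under the deformation flow. Lusternik--Schnirelmann theory then yields $m$ critical points, i.e.\ $m$ geodesics in $M\setminus B_1$, which give $m$ solutions of \eqref{permag}. The main obstacle is making the category estimate rigorous in the sublevel set rather than in the whole path space. This requires checking that the Palais--Smale condition holds on the open sublevel set and that gradient-flow lines cannot reach $B_1$. I would handle this exactly as in the proof of Theorem~\ref{main}, with the only new inputs being the uniform (in $\lambda$) constants above.
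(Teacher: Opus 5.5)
Your proposal is correct and follows the paper's proof. You pass to the Jacobi--Randers metric $F_{e,\lambda}$ and obtain the barrier directly from $e-V_\lambda\ge e-1-\overline V+I_\lambda$ on $B_2\setminus B_1$ and $F_{e,\lambda}\ge(1-\|\beta\|_{\alpha_e})\alpha_{e,\lambda}$ (this is Lemma~\ref{clj}); you then bound the energy uniformly in $\lambda$ on the retracted compact families in $\Omega_{p,q}(M\setminus B_3)$ and run the minimax argument of Theorem~\ref{main}. Your remark that a reduction to Theorem~\ref{mainr} via $\psi_\lambda$ may fail when $V$ is unbounded below is apt, and the paper likewise proves the barrier estimate directly; the only slip is that a length bound $c\sqrt{I_\lambda}$ gives energy at least $c^2I_\lambda$, which is harmless.
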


\section{Proof of Theorems \ref{main} and \ref{mainr}}\label{sectionproofmain}
Let $X$ be an open subset of $M$ and let $p,q \in X$. We shall consider the infinite dimensional Hilbert manifold
\[\Omega_{p,q}(X) = \left\{x\in H^1 ([0,1], X)  \mid  x(0)=p,\ x(1)=q \right\},\]
where
\[H^1 ([0,1], X) = \left\{ x :[0,1] \rightarrow X  \mid \text{$x$ is absolutely continuous and } \int_0^1 h_R(\dot x,\dot x)\,ds<+\infty\right\},\]
$h_R$ being any complete auxiliary Riemannian metric on $M$ (when dealing with a Randers metric \eqref{randers}, we choose $h_R=h$).   If $X$ is not open,
we shall use the same notation only in the topological sense, namely as the
subspace of $\Omega_{p,q}(M)$ consisting of curves with image contained in $X$.

We consider the energy functionals $J_{\lambda,X}$ and $E_{\lambda,X}$ associated with the metrics $\varphi_\lambda F$ and $F_\lambda$ in~\eqref{pranders}, respectively, defined by
\begin{equation*}
 J_{\lambda,X}(x) = \int_0^1 \varphi_\lambda^2(x)\, F^2(x,\dot{x})\, ds, \qquad\qquad
 E_{\lambda,X}(x) = \int_0^1 F_\lambda^2(x,\dot{x})\, ds,
\end{equation*}
for $x \in \Omega_{p,q}(X)$.
As proved in \cite{cjm}, the critical points of these functionals, for an open subset $X$,  are smooth curves and are precisely the geodesics of $(X,\varphi_\lambda F)$ and $(X,F_\lambda)$, respectively, joining $p$ to $q$.

In what follows, for $c>0$, we denote by $J_{\lambda,X}^c$ and $E_{\lambda,X}^c$ the sublevel sets of $J_{\lambda,X}$ and $E_{\lambda,X}$,  namely
\[J_{\lambda,X}^c=\left\{x \in \Omega_{p,q}(X)\mid J_{\lambda,X}(x)\leq c \right\}, \qquad 
E_{\lambda,X}^c=\left\{x \in \Omega_{p,q}(X) \mid E_{\lambda,X}(x)\leq c\right\}. \]

The proofs of the Theorems~\ref{main} and \ref{mainr} rely on the following lemma.

\begin{lemma} \label{cl}
Let $B_1\subset B_2\subset B_3$, $(\varphi_\lambda)_{\lambda>0}$ be a localized deformation datum adapted to $p,q\in M$. In the case of the functional $E_{\lambda,M}$, assume moreover that $F$ is a Randers metric as in \eqref{randers} satisfying \eqref{norm}. Then there exists a positive  number $c(\lambda)$, $c(\lambda)\to+\infty$ as $\lambda\to+\infty$, such that, if $x\in J_{\lambda,M}^{c(\lambda)}$  (resp. $x\in E_{\lambda,M}^{c(\lambda)}$) then
\begin{equation*}
x([0,1]) \subset M \setminus B_1  .
\end{equation*} 
\end{lemma}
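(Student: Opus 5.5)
The argument is a barrier estimate. Any curve from $p$ to $q$ that enters $B_1$ has to cross the shell $B_2\setminus B_1$, where the conformal factor is at least $I_\lambda$. The energy of such a crossing therefore grows with $I_\lambda$, and $c(\lambda)$ can be taken just below that lower bound.

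\textbf{Setting up the crossing.} Set $\delta:=\dist(B_1,\partial B_2)>0$ from \eqref{dab}. Take $x\in\Omega_{p,q}(M)$ and suppose $x(s_1)\in B_1$ for some $s_1\in[0,1]$. Since $p\notin B_3\supset\overline{B_2}$, the continuous curve $x|_{[0,s_1]}$ starts outside $\overline{B_2}$ and ends in $B_1\subset B_2$. Let $s_1^*\le s_1$ be the first time the curve lies in $B_1$ (a minimum, since $B_1$ is closed). Let $s_0<s_1^*$ be the last time before $s_1^*$ at which $x(s_0)\in\partial B_2$; it exists because $\partial B_2$ is closed and the curve must cross it. Then on $(s_0,s_1^*)$ the curve stays in $B_2\setminus B_1$. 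Its $F$-length over $[s_0,s_1^*]$ is at least $\dist(x(s_0),x(s_1^*))\ge\delta$, and this uses the forward direction, from a point of $\partial B_2$ to a point of $B_1$. The definition in \eqref{dab} is phrased as $\dist(x_1,x_2)$ with $x_1\in B_1$, $x_2\in\partial B_2$, so I would instead use the return segment from $x(s_1^*)\in B_1$ up to the first later time $s_2$ with $x(s_2)\in\partial B_2$. That time exists because $q\notin \overline{B_2}$. This segment goes forward from $B_1$ to $\partial B_2$, matching the orientation in \eqref{dab}, and lies in $\overline{B_2}\setminus \mathrm{int}$-issues aside inside $B_2$. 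Here I expect a small care point: the segment may re-enter $B_1$. I would fix it by taking the last time $t_1\in[s_1^*,s_2]$ with $x(t_1)\in B_1$. Then $x((t_1,s_2))\subset B_2\setminus B_1$, and the $F$-length of $x|_{[t_1,s_2]}$ is at least $\delta$.

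\textbf{Energy bound.} By Cauchy--Schwarz,
\[
J_{\lambda,M}(x)\ge\int_{t_1}^{s_2}\varphi_\lambda^2(x)F^2(x,\dot x)\,ds\ge \frac{\bigl(\int_{t_1}^{s_2}\varphi_\lambda(x)F(x,\dot x)\,ds\bigr)^2}{s_2-t_1}\ge I_\lambda^2\delta^2 .
\]
The set of times where $x\in B_1$ inside $[t_1,s_2]$ is only the endpoint, so it has measure zero and the bound $\varphi_\lambda\ge I_\lambda$ holds a.e. on the interval. So I set $c(\lambda):=\tfrac12 I_\lambda^2\delta^2$, which tends to $+\infty$ by \eqref{hp}. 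Any $x\in J^{c(\lambda)}_{\lambda,M}$ then avoids $B_1$.

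\textbf{Randers case.} The hard step is the lower bound for $F_\lambda=\sqrt{\varphi_\lambda}\,\alpha+\beta$, because $\beta$ is not rescaled. From \eqref{norm}, $\beta(v)\ge-\|\beta\|_\alpha\,\alpha(v)$. Since $\varphi_\lambda\ge1$, this gives
\[
F_\lambda(v)\ge(\sqrt{\varphi_\lambda}-\|\beta\|_\alpha)\alpha(v)\ge(1-\|\beta\|_\alpha)\sqrt{\varphi_\lambda}\,\alpha(v).
\]
Also $F(v)\le(1+\|\beta\|_\alpha)\alpha(v)$. Hence the $F$-length bound $\delta$ on the segment turns into the $\alpha$-length bound $\delta/(1+\|\beta\|_\alpha)$. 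The same Cauchy--Schwarz argument then gives
\[
E_{\lambda,M}(x)\ge(1-\|\beta\|_\alpha)^2 I_\lambda\,\delta^2/(1+\|\beta\|_\alpha)^2 .
\]
The right-hand side still diverges, so $c(\lambda)$ is half of it. Alternatively, one can interpret $\delta$ with the $\alpha$-distance, and the two are comparable.
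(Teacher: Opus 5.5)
Your proposal is correct and follows the paper's proof. You isolate an arc running forward from $B_1$ to $\partial B_2$ with interior in $B_2\setminus B_1$, bound $\varphi_\lambda$ below by $I_\lambda$ there, and combine Cauchy--Schwarz with \eqref{dab}; in the Randers case you pass to an $\alpha$-length bound via $F\le(1+\|\beta\|_\alpha)\alpha$. The differences are only cosmetic: you construct the crossing interval explicitly and apply Cauchy--Schwarz directly on that arc, instead of reparametrizing to $[0,1]$ and using the infima $I_1$, $I_2$, which gives the slightly weaker but still divergent Randers constant $(1-\|\beta\|_\alpha)^2I_\lambda$ in place of $(\sqrt{I_\lambda}-\|\beta\|_\alpha)^2$.
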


\begin{proof} 
We consider the set $Y$ of the paths  connecting $B_1$ to the boundary of $B_2$ and  parametrized on $[0,1]$, i.e.
\begin{equation} \label{Y}
Y = \left\{ x \in H^1 ([0,1],  \overline{B_2}) \mid   x(0) \in B_1, \, x(1) \in \partial B_2 \right\} .
\end{equation}
 We first examine the functional $J_{\lambda,M}$.  We define the constant $I_1$ as the infimum of the  unperturbed energy functional over $Y$ i.e.
\begin{equation} \label{i}
I_1 = \inf_{ x \in Y} \int_0^1 F^2 (x, \dot{x}) ds.
\end{equation}
By the Cauchy--Schwarz inequality,  for any $x\in Y$ we have 
\[\int_0^1 F^2(x,\dot x) ds\geq\dist(B_1,\partial B_2)^2,\]
thus, by  \eqref{dab},  $I_1>0$.

Now, consider any path $x \in \Om$. If $x([0,1]) \cap B_1 \neq \emptyset$, since $x$ starts and ends outside of $B_3$ (and thus outside of $B_2$), then $x([0,1])$ must have non-empty intersection with   $B_2 \setminus B_1$. Therefore, there exists a subinterval $[s_0, s_1] \subset (0,1)$ such that $x(s_0) \in B_1$, $x(s_1) \in \partial B_2$ and $ x(s) \in B_2 \setminus B_1$ for all $s \in (s_0, s_1)$. We can reparametrize this subpath so that it is defined over $[0,1]$: let $y: [0,1] \rightarrow \overline{B_2}$, $y(s) = x (\alpha (s))$, $s \in [0,1]$, where $\alpha : [0,1] \to [s_0, s_1]$ is defined by $\alpha(s) = (s_1 - s_0)s + s_0$. Then $y \in Y$,  $\dot{y}(s) = (s_1 - s_0)\dot{x}(\alpha(s))$ and
\begin{align}
\int_0^1 F^2(y, \dot{y}) ds & = (s_1 - s_0)^2\int_0^1  F^2( x(\alpha(s)), \dot{x}(\alpha(s))) d s  \nonumber \\
			&= (s_1 - s_0) \int_{s_0}^{s_1} F^2(x, \dot{x})  ds. \label{rep}
		\end{align}
%
We can now obtain a lower bound  for  $J_{\lambda,M} (x)$. Indeed, by the definition of $I_\lambda$ in \eqref{infsup}, \eqref{i}, \eqref{rep} and since $s_1 - s_0 \in (0,1)$ we have
\begin{align} \label{cl1}
J_{\lambda,M} (x) &\geq  \int_{s_0}^{s_1} \varphi^2_\lambda (x )F^2 (x, \dot{x}) ds \nonumber \\
			& \geq I^2_\lambda \int_{s_0}^{s_1} F^2(x, \dot{x}) d s  \nonumber\\ 
			&  = \frac{ I^2_\lambda}{s_1 - s_0} \int_0^1 F^2 (y, \dot{y}) ds  
			 \geq  I^2_\lambda I_1 . 
\end{align}
Thus we can  define 
\[c(\lambda): = \frac{1}{2}  I^2_\lambda I_1. \]
With this choice, if  $x\in \Om$ verifies $J_{\lambda,M}  (x) \leq c(\lambda )$,  then $x$ cannot intersect $B_1$. Indeed, otherwise, 
by \eqref{cl1}, we would have
\[I_\lambda^2 I_1\leq J_{\lambda,M}(x)\leq c(\lambda)= \frac{1}{2} I_\lambda^2I_1,\]
which is  a contradiction. Furthermore, from \eqref{hp}  it follows that $c(\lambda) \to +\infty $ as $\lambda \to +\infty$. 

For the functional $E_{\lambda,M}$, we introduce the constant $I_2$ defined by
\begin{equation} \label{ii}
I_2 = \inf_{ x \in Y} \int_0^1 h (\dot x, \dot{x}) ds.
\end{equation}
Denoting by $\dist_h$ the Riemannian distance induced on $M$ by  $h$, again by the Cauchy--Schwarz inequality,  for any $x\in Y$ we have 
\[\int_0^1h(\dot x,\dot x)\,ds\geq\dist_h(B_1,\partial B_2)^2\geq\frac{\dist(B_1,\partial B_2)^2}{(1+\|\beta\|_\alpha)^2}.\]
thus, by \eqref{dab},   $I_2>0$. Again, consider  $x \in \Om$ such that $x([0,1]) \cap B_1 \neq \emptyset$ and   let $y \in Y$ denote the reparametrization on $[0,1]$ of the subpath of $x$ contained in $B_2 \setminus B_1$ for $s \in (s_0,s_1)$.  Thus
\begin{equation} \label{rep1}
\int_0^1 F_\lambda^2(y, \dot{y}) ds  = (s_1 - s_0) \int_{s_0}^{s_1} F_\lambda ^2(x, \dot{x})  ds.
\end{equation}
By \eqref{norm}, \eqref{ii},  \eqref{rep1},  since $s_1 - s_0 \in (0,1)$ and since $\sqrt{I_\lambda}\geq 1>\|\beta\|_\alpha$ (recall that $\vpl\geq 1$ on $M$, hence $I_\lambda\geq 1$),   we have
\begin{align*}
E_{\lambda,M}(x)  &\geq \int_{s_0}^{s_1} F^2_\lambda( x, \dot{x}) ds  \geq    \int_0^1 F^2_\lambda (y, \dot{y})ds \nonumber \\
 & \geq  \int_0^1 \left( \sqrt{I_\lambda h(\dot y,\dot y)} - \|\beta\|_\alpha\sqrt{h(\dot y,\dot y)} \right)^2 ds  \nonumber\\ 
 &  = \left(\sqrt{I_\lambda } - \|\beta\|_\alpha \right)^2  \int_0^1  h(\dot y,\dot y)  ds  \nonumber \\
	& \geq  \left(\sqrt{I_\lambda } - \|\beta\|_\alpha \right)^2 I_2 .
\end{align*}
Now we can proceed  as in the previous case, by defining 
\[c(\lambda) := \frac{1}{2 }\left(\sqrt{I_\lambda } - \|\beta\|_\alpha \right)^2 I_2. \]
%
%
With this choice, the same contradiction argument used for
$J_{\lambda,M}$ shows that every curve in
$E_{\lambda,M}^{c(\lambda)}$ is contained in $M\setminus B_1$.
Moreover, since $I_\lambda\to+\infty$ by \eqref{hp}, we have
$c(\lambda)\to+\infty$.
\end{proof}
Before proving Theorem~\ref{main} let us 
recall the notion of  category used throughout the paper.  If
$Y\subset X$ are topological spaces, the Lusternik--Schnirelmann category of
$Y$ relative to $X$, denoted by $\cat_X(Y)$,
is the least integer $k\in\mathbb N$ such that $Y$ can be covered by $k$ closed
subsets $Y_1,\ldots,Y_k$ of $Y$, each of which is contractible in $X$.  This
means that, for each $i$, the inclusion $Y_i\hookrightarrow X$ is homotopic in
$X$ to a constant map.  If no such finite covering exists, we set
$\cat_X(Y)=+\infty$.  When $Y=X$, we simply write $\cat(X)$.
Thus, if $Y\subset X\subset Z$, the symbol $\cat_X(Y)$ means that the category
of $Y$ is computed with respect to the ambient space $X$, not with respect to
$Z$.  In general this distinction matters, since a subset may be contractible
in $Z$ without being contractible in $X$.
\begin{proof}[Proof of Theorem \ref{main}]  Fix $p,q\in M$ and $m\in\N$, and let the localized deformation datum be adapted to $p$ and $q$. By Definition~\ref{adapted-datum}, $p,q\in M\setminus B_3$. By assumption (ii), the set
$M \setminus B_3$ is connected and non-contractible, being a strong deformation retract of the connected, non-contractible manifold $M\setminus B_1$.  Let $K$ be a compact subset of $\Omega_{p,q}(M\setminus B_1)$ and  $K':=\mathcal R(K,1)\subset \Omega_{p,q}(M\setminus B_3)$.
Then $K'$ is compact, moreover,
\begin{equation}\label{catineq}
\cat_{\Omega_{p,q}(M\setminus B_1)}(K')
\ge
\cat_{\Omega_{p,q}(M\setminus B_1)}(K).
\end{equation}
Indeed, if $C_1,\ldots,C_N$ is a closed covering of $K'$ by subsets contractible in $\Omega_{p,q}(M\setminus B_1)$,
then
\[
D_i:=K\cap \mathcal R^{-1}(C_i,1)
\]
is a closed covering of $K$. Since $\mathcal R(\cdot, 1)$ is homotopic to the identity on $\Omega_{p,q}(M\setminus B_1)$,
each $D_i$ is contractible in $\Omega_{p,q}(M\setminus B_1)$. Hence
$\cat_{\Omega_{p,q}(M\setminus B_1)}(K)\le N$.
Taking the minimum over all such coverings of $K'$ gives \eqref{catineq}.
By \cite[Proposition~3.2]{fh}, applied to $M\setminus B_1$, the path space
$\Omega_{p,q}(M\setminus B_1)$ contains compact subsets of arbitrarily large
category. Choose then a compact set $K\subset\Omega_{p,q}(M\setminus B_1)$
such that $\cat_{\Omega_{p,q}(M\setminus B_1)}(K)\ge m$.
Then, by \eqref{catineq}, $\cat_{\Omega_{p,q}(M\setminus B_1)}(K')\ge m$.
Moreover, by \eqref{infsup} and \eqref{hp}, for every $x\in K'$ and every
$\lambda>0$,
\begin{equation} \label{at}
	\int_0^1 \vpl^2 (x) F^2 (x, \dot x) ds     \leq  S^2 \cdot  \sup_{y \in K'} \int_0^1 F^2 (y, \dot y) ds  = :C,
\end{equation}  
where $C$ is independent of $\lambda$. Since $c(\lambda)\to+\infty$, choose
$\lambda_0$ such that $c(\lambda)>C$ for every $\lambda>\lambda_0$.
Fix $\lambda>\lambda_0$ and define, for $k=1, \dots, m$,
\[c_k=\inf_{A\in\Gamma_k}\sup_{x\in A}J_{\lambda,M\setminus B_1}(x),\]
where
\[\Gamma_k=\left\{A\subset\Omega_{p,q}(M\setminus B_1): \cat_{\Omega_{p,q}(M\setminus B_1)}(A)\geq k\right\}.\]
Then  we have  $K'\in\Gamma_k$ for every $k=1,\dots,m$ and 
\[0\leq c_1\leq\cdots\leq c_m\leq C<c(\lambda).\]
By Lemma \ref{cl}, for $\varepsilon > 0$ sufficiently small one has $c_k+\varepsilon<c(\lambda)$ and
\[
J^{c_k + \varepsilon}_{\lambda, M}\subset \Omega_{p,q}(M\setminus B_1).
\]
Thus, after identifying this sublevel with its image under the natural inclusion $\Omega_{p,q}(M\setminus B_1)\subset\Omega_{p,q}(M)$, it agrees with $J^{c_k + \varepsilon}_{\lambda, M\setminus B_1}$. Moreover, $J_{\lambda, M }$ verifies the Palais-Smale condition as $(M, \vpl F)$ is (forward or backward) complete, see Remark \ref{compl} and \cite[Theorem 3.1]{cjm}. Since $M\setminus B_1$ is an open subset of $M$, $J_{\lambda, M\setminus B_1} $ and $J_{\lambda, M}$ have the same critical points below the level  $c(\lambda)$. The deformations provided by the classical Deformation Lemma (see for example \cite{pal,rab}) applied to $J_{\lambda, M}$ at levels below $c(\lambda)$ do not increase the value of the functional; hence, they map the sublevel $J^{c(\lambda)}_{\lambda, M}$ into itself and, by Lemma \ref{cl}, they take place inside $\Omega_{p,q}(M\setminus B_1)$, so that they do not decrease the Lusternik--Schnirelmann category relative to $\Omega_{p,q}(M\setminus B_1)$. The standard minimax argument then shows that each $c_k$ is a critical value of $J_{\lambda, M\setminus B_1}$ and that, if $c_k = c_{k+j}$ for some $k$ and some $j\geq 1$, the set of critical points at level $c_k$ has category at least $j+1$ in $\Omega_{p,q}(M\setminus B_1)$, hence it has at least $j+1$ points. In any case, $J_{\lambda, M\setminus B_1}$ has  at least $m$ critical points, which are  geodesics of $(M,  \vpl F)$ joining $p$ to $q$, as $M\setminus B_1$ is an open subset of $M$, and whose support is contained in $M\setminus B_1$. 
\end{proof}

\begin{proof}[Proof of Theorem \ref{mainr}] 
The proof follows the same lines as that of Theorem \ref{main}. We only need to observe that  $(M, F_\lambda)$ is complete (see Remark \ref{rcompl}). Moreover,  an estimate similar to \eqref{at} holds in this case as well, since  for any $\lambda >0$ and $x\in K'$, $K'$ compact subset of $\Omega_{p,q}(M\setminus B_3)$,  by \eqref{infsup}, \eqref{hp} and \eqref{norm}, recalling that $x$ takes values in $M\setminus B_3$, where $\vpl\leq S$, we have
\begin{align*}  
\int_0^1 F^2_\lambda (x, \dot x) ds    &  =   \int_0^1 \left( \sqrt{\vpl   h(\dot x,\dot x)}+\beta(\dot x ) \right)^2 ds   \\
 &\leq  \int_0^1 \left( \sqrt{  S h(\dot x,\dot x)}+ \|\beta\|_\alpha \sqrt{h(\dot x,\dot x)} \right)^2 ds   \\
& \leq  (\sqrt{S}+ \|\beta\|_\alpha)^2    \sup_{y \in K'} \int_0^1 h (\dot y, \dot y) ds  =: C.
\end{align*} 
\end{proof}
\section{Proof of Theorem \ref{mainmag}}
Fix $p,q\in M$, $m\in\N$, and a localized deformation datum adapted to $p$ and $q$. Fix $e > \overline V$. The solutions of \eqref{permag} with total  energy  $e$ are pre-geodesics of
\begin{equation}\label{finjac2}
\Fl=\alpha_{e,\lambda}+\beta,\qquad
\alpha_{e,\lambda,x}(y):=\sqrt{\hl|_x(y,y)},\qquad
\hl:=2(e-V_\lambda)\,h,
\end{equation}
where $V_\lambda$ is defined in \eqref{vl}. This is  a forward and backward complete Randers metric on $M$. Indeed, by \eqref{supV} and \eqref{lowerphi},
\[
e-V_\lambda=e-1-V+\varphi_\lambda\ge e-\overline V>0.
\]
Thus, $\hl\geq  2(e-\overline V)h$ and since $(M,h)$ is complete,  $(M,\hl)$ is complete as well. Moreover, by \eqref{fincond1},
\begin{equation} \label{fincond2}
\|\beta\|_{\alpha_{e,\lambda}} := \sup_{v\in TM\setminus\{0\}} \frac{|\beta(v)|}{\alpha_{e,\lambda}(v)} \leq  \|\beta\|_{\alpha_e} < 1.
\end{equation}
We consider the energy functional associated with  \eqref{finjac2},
\begin{equation*}
\Jl (x)= \int_0^1 \Fl^2 (x, \dot x)ds, \qquad   x\in \Omega_{p,q}(M)
\end{equation*}
and prove the following lemma analogous to Lemma~\ref{cl}.
\begin{lemma} \label{clj}
There exists a positive number $c(\lambda)$, with $c(\lambda)\to+\infty$ as $\lambda\to+\infty$, such that, if $x\in\Jl^{c(\lambda)}$ then
\begin{equation*}
x([0,1]) \subset M \setminus B_1  .
\end{equation*} 
\end{lemma}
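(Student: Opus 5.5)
We follow the proof of Lemma~\ref{cl} for $E_{\lambda,M}$, with $\hl=2(e-V_\lambda)h$ in place of $\vpl h$ and $\|\beta\|_{\alpha_e}$ in place of $\|\beta\|_\alpha$. With $Y$ as in \eqref{Y}, set
\[
I_3:=\inf_{x\in Y}\int_0^1 h(\dot x,\dot x)\,ds .
\]
By Cauchy--Schwarz, $I_3\ge \dist_h(B_1,\partial B_2)^2>0$, since in Theorem~\ref{mainmag} condition \eqref{dab} is read with $\dist_h$. This step is actually simpler than in Lemma~\ref{cl}, because no comparison with a Finsler distance is needed.

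Next, the conformal factor is bounded below on the shell. On $B_2\setminus B_1$, by \eqref{supV},
\[
2(e-V_\lambda)=2(e-1-V+\vpl)\ge 2(e-\overline V-1+I_\lambda)=:\mu_\lambda ,
\]
and $\mu_\lambda\to+\infty$ by \eqref{hp}. Also, $|\beta(v)|\le \|\beta\|_{\alpha_e}\,\alpha_e(v)$, and $\alpha_e(v)\le \sqrt{2(e-\inf_M V)}\,|v|_h$ only if $V$ is bounded below, which is not assumed. This is the main obstacle, and I would handle it as follows. Write $\mu_\lambda=2(e-\overline V)+2(I_\lambda-1)$. Pointwise on the shell, $\alpha_{e,\lambda}(v)^2=\alpha_e(v)^2+2(\vpl-1)|v|_h^2$, so, using \eqref{fincond1},
\[
\Fl(v)\ge \alpha_{e,\lambda}(v)-\|\beta\|_{\alpha_e}\alpha_e(v)\ge (1-\|\beta\|_{\alpha_e})\,\alpha_{e,\lambda}(v)\ge (1-\|\beta\|_{\alpha_e})\sqrt{\mu_\lambda}\,|v|_h ,
\]
since $\alpha_e\le\alpha_{e,\lambda}$. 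This bound avoids any lower bound on $V$.

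Now take $x\in\Omega_{p,q}(M)$ meeting $B_1$. Because $p,q\notin B_3$, we can extract $[s_0,s_1]$ and a reparametrization $y\in Y$ exactly as in Lemma~\ref{cl}. Using the scaling identity \eqref{rep1} for $\Fl$ and $s_1-s_0<1$, we get
\[
\Jl(x)\ge\int_{s_0}^{s_1}\Fl^2(x,\dot x)\,ds\ge\int_0^1\Fl^2(y,\dot y)\,ds\ge (1-\|\beta\|_{\alpha_e})^2\mu_\lambda I_3 .
\]
Define $c(\lambda):=\tfrac12(1-\|\beta\|_{\alpha_e})^2\mu_\lambda I_3$. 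This is positive and tends to $+\infty$. If $x$ met $B_1$ while $\Jl(x)\le c(\lambda)$, the two bounds would contradict each other. Hence every $x\in\Jl^{c(\lambda)}$ satisfies $x([0,1])\subset M\setminus B_1$.
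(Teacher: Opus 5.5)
Your proof is correct and is essentially the paper's argument. It uses the same subarc extraction and rescaling, and the same pointwise bound $F_{e,\lambda}\ge(1-\|\beta\|_{\alpha_e})\,\alpha_{e,\lambda}$; the paper obtains this from \eqref{fincond2}, so it never needs a lower bound on $V$ either. It also uses the same shell estimate $e-V_\lambda\ge e-1-\overline V+I_\lambda$ and takes $c(\lambda)$ as half the resulting barrier. The only cosmetic difference is that you keep $I_3$, whereas the paper substitutes its lower bound $\dist_h(B_1,\partial B_2)^2$ directly.
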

\begin{proof}
Define the set $Y$ as in \eqref{Y} and observe that, by \eqref{dab}, for any $y\in Y$
\begin{equation} \label{v}
\int_0^1 h (\dot y, \dot y)\, ds \geq \dist_h (B_1, \partial B_2)^2 >0.
\end{equation}
We obtain an estimate analogous to \eqref{cl1}. To this end,  for any $x \in \Om$ with  $x([0,1]) \cap B_1 \neq \emptyset$, let  $[s_0, s_1] \subset (0,1)$ be  an interval such that $x(s_0) \in B_1$, $x(s_1) \in \partial B_2$ and $x((s_0,s_1))\subset B_2\setminus B_1$.   Let  $y$ be a reparametrization of $x|_{[s_0,s_1]}$ on the interval $[0,1]$; by \eqref{finjac2} and \eqref{fincond2},   we have
\begin{align} \label{cl3}
\Jl(x)  &\geq \int_{s_0}^{s_1}\Fl^2( x, \dot{x}) ds    = \frac{1}{s_1 - s_0} \int_0^1 \Fl^2 (y, \dot{y})\, ds  \nonumber \\
			& \geq 2 (1- \|\beta\|_{\alpha_e})^2   \int_0^1 (e-V_\lambda (y))\,h( \dot y, \dot{y}) d s . 
\end{align}
By \eqref{infsup}, \eqref{hp} and \eqref{supV}, on $B_2\setminus B_1$ we have
\[
 e-V_\lambda(y)
 \ge e-1-\overline V+I_\lambda .
\]
Thus, by  \eqref{v} and \eqref{cl3},
\[
\Jl(x)  \geq 2 (1- \|\beta\|_{\alpha_e})^2  (e-1-\overline V+I_\lambda)\, \dist_h(B_1, \partial B_2)^2 .
\]
Then for the function
\[
c(\lambda ) =  (1- \|\beta\|_{\alpha_e})^2  (e-1-\overline V+I_\lambda)\, \dist_h (B_1, \partial B_2)^2  ,
\]
we get the thesis.

\end{proof}
\begin{proof}[Proof of Theorem \ref{mainmag}] 
By Lemma \ref{clj}, the proof follows the same lines as that of Theorem \ref{main}. We only have  to establish an estimate analogous to \eqref{at}. To this aim, let $K'$ be a compact subset of $\Omega_{p,q}(M\setminus B_3)$. For every $\lambda >0$ and $x\in K'$, by \eqref{infsup} and \eqref{hp} we have $\vpl \leq S$ on $M\setminus B_3$; since $x$ takes values in $M\setminus B_3$, it follows that
\[
e-V_\lambda(x)\le e-1-V(x)+S.
\]
Moreover, by \eqref{fincond2},
\[
 \left|\beta(\dot{x})\right|    \leq   \sqrt{2(e-V_\lambda (x))   h  (\dot x , \dot x ) }
 \leq   \sqrt{2(e-1-V(x)+S)   h  (\dot x , \dot x ) } .
\]
Therefore
\begin{align*}  
\int_0^1 \Fl^2  (x, \dot x) ds    &  \leq 8 \int_0^1  (e-1-V(x)+S )h  (\dot x , \dot x )ds     
     \leq  8     \sup_{y \in K'} \int_0^1  (e-1-V(y)+S )h  (\dot y , \dot y )ds . 
\end{align*} 
The last quantity is finite because the compactness of $K'$ in $\Omega_{p,q}(M\setminus B_3)$ implies that the union of the images of the curves in $K'$ is a compact subset of $M$, and $V$ is bounded on this compact set.
The minimax argument therefore gives at least $m$ geodesics of the
Jacobi--Randers metric $F_{e,\lambda}$ contained in $M\setminus B_1$. After a reparametrization, these geodesics yield solutions of
\eqref{permag} with total energy $e$. Since reparametrization does not change
the image, the corresponding trajectories are still contained in
$M\setminus B_1$.
\end{proof}
\section{Applications to electromagnetic trajectories and null geodesics in spacetimes with a causal Killing field}
\subsection{Localized deformations of the scalar potential in a standard stationary spacetime endowed with an electromagnetic field}
In this subsection we explain how the preceding multiplicity results apply to
the Lorentz-force equation in a stationary spacetime; see \cite{cgm}.
Let $M_0$ be a smooth, connected manifold and consider the product spacetime
$M=\mathbb{R}\times M_0$; let $t$ denote the coordinate on the
$\mathbb{R}$ factor.  A {\em standard stationary} Lorentzian metric on $M$ has the
form
\begin{equation}\label{ssst}
	g=-\Lambda(x)\,dt^2
	+\omega_x\otimes dt+dt\otimes\omega_x
	+g_0|_x,
	\qquad x\in M_0,
\end{equation}
where $g_0$ is a Riemannian metric on $M_0$,
$\omega\in\Omega^1(M_0)$, and
$\Lambda\in C^\infty(M_0)$ is positive.  All these objects are understood to
be pulled back to $\mathbb{R}\times M_0$ and are independent of $t$.
Consequently, $K=\partial_t$ is a timelike Killing vector field and
$g(K,K)=-\Lambda$.
It is convenient to introduce the  one-form and Riemannian tensors
\[
\widehat\omega=\frac{\omega}{\Lambda},
\qquad
{\widehat g}_0=\frac{g_0}{\Lambda},
\qquad
h_0=g_0+\frac{1}{\Lambda}\omega\otimes\omega,
\]
and
\[\widehat h=\frac{1}{\Lambda}h_0
={\widehat g}_0+\widehat\omega\otimes\widehat\omega;
\]
in particular, $h_0$ and $\widehat h$ are Riemannian metrics on $M_0$.
Then we can rewrite $g$ in \eqref{ssst} as 
\[
g=-\Lambda(dt-\widehat\omega)^2+h_0 .
\]
Let $\mathcal A$ be a stationary electromagnetic potential,
$d\mathcal A=\mathcal F$, and fix a charge-to-mass ratio $\rho$.
Writing $\mathcal A=g(W,\cdot)$ with $W=\phi\partial_t+A$, set
\[
\chi=\phi-\widehat\omega(A).
\]
A worldline  $\gamma=(t,x)$ (i.e. a smooth curve parametrized with proper time, so  that $g(\dot\gamma, \dot\gamma)=-1$) is a solution of the Lorentz-force equation for a charge-to-mass ratio $\rho$,  if $\gamma$  solves
\begin{equation*}
\nabla_{\dot\gamma}\dot\gamma=-\rho\,(\iota_{\dot\gamma}\mathcal F)^\sharp,
\end{equation*}
where $\nabla$ is the Levi-Civita connection of $g$, $\iota_{\dot\gamma}\mathcal  F:=\mathcal F(\dot\gamma,\cdot)$ and ${}^\sharp$ is the musical isomorphism ${}^\sharp:T^*M\to TM$ associated with $g$.
The conserved energy (per unit-mass)  of a solution is
\[
\varepsilon
=
-g(\partial_t,\dot\gamma)-\rho\mathcal A(\partial_t)
=
\Lambda\big(\dot t-\widehat\omega(\dot x)+\rho\chi(x)\big).
\]

By \cite[Theorem 5.1]{cgm}, the projection $x$ of any such solution solves the non-relativistic electromagnetic
system on $(M_0,h_0)$
\[
\nabla^{h_0}_{\dot x}\dot x+\nabla^{h_0}V(x)
=
-\big(\iota_{\dot x}\mathcal B\big)^{\sharp,h_0},
\qquad
h_0(\dot x,\dot x)+2V(x)=-1,
\]
where
\[
V=-\frac{(\varepsilon-\rho\Lambda\chi)^2}{2\Lambda},
\qquad
\Theta=\varepsilon\widehat\omega+\rho\mathcal A_{M_0},
\qquad
\mathcal B=d\Theta,
\]
and $\mathcal A_{M_0}=A^{\flat,h_0}$. Conversely, once $x$ is known, the time
component is recovered from
\[
\dot t=\frac{\varepsilon}{\Lambda}+\widehat\omega(\dot x)-\rho\chi(x).
\]
Hence, the search for solutions  of the Lorentz-force equation with fixed energy $\varepsilon$ reduces to the search for geodesics of a Jacobi-Randers metric. With $\gamma$ parametrized with proper time, this Jacobi--Randers metric is
$F=F_{-1/2}$, i.e. 
\[
F(x,y)=\sqrt{(-1-2V(x))\,h_0(y,y)}+\Theta_x(y),
\]
on the domain where $-1>2V$ and $\|\Theta\|_{(-1-2V)h_0}<1$. In particular, if
$(M_0,h_0)$ is complete and
\begin{equation}\label{infandsup}
\inf_{M_0}\frac{(\varepsilon-\rho\Lambda\chi)^2}{\Lambda}>1,
\qquad
\sup_{v\in T{M_0}\setminus\{0\}}
\frac{|\Theta(v)|}{\sqrt{(-1-2V)h_0(v,v)}}<1,
\end{equation}
then $F$ is a forward and backward complete Randers metric on ${M_0}$.

Theorems~\ref{main} and \ref{mainr} act on the  
metric $F$: the former rescales $F$ conformally to
$F_{\lambda}=\varphi_\lambda F$, changing both the Riemannian part
$\sqrt{2(e-V)h_0}$ and the one-form $\Theta$, while the latter deforms only
the Riemannian part, keeping $\Theta$ fixed. In  these two results the multiplicity is intrinsic to the reduced Finsler
problem on $M_0$, and their  interpretation  as a multiplicity result for
electromagnetic trajectories with fixed  Killing energy   holds only when such
deformed metrics can be realized as  Jacobi-Randers metrics associated with  a
standard stationary spacetime and  a stationary electromagnetic field.

Theorem~\ref{mainmag}, by contrast, admits a direct electromagnetic
realization when $\rho\neq0$. It keeps the magnetic field $\mathcal B=d\Theta$
fixed and replaces the potential by $V_\lambda=V-\varphi_\lambda+1$. 
Thus, one can produce $V_\lambda$ by modifying only the scalar coefficient
$\phi$ in the stationary potential vector field $W=\phi\partial_t+A$, while
leaving $\Lambda$, $\widehat\omega$ and $A$, hence $\Theta$ and
$\mathcal B$, unchanged.
Namely, set
\[
Z:=\varepsilon-\rho\Lambda\chi=\varepsilon-\rho\Lambda(\phi-\widehat\omega(A)).
\]
Solving 
\[
\bigl(\varepsilon-\rho\Lambda(\phi_\lambda-\widehat\omega(A))\bigr)^2
=
Z^2+2\Lambda(\varphi_\lambda-1)
\]
gives
\[
\phi_\lambda^\sigma
=
\widehat\omega(A)
+
\frac{\varepsilon-\sigma
	\sqrt{Z^2+2\Lambda(\varphi_\lambda-1)}}{\rho\Lambda},
\qquad \sigma\in\{+1,-1\}.
\]
On the region where $\varphi_\lambda=1$, this reduces to
\begin{equation}\label{phisigma}
\phi_\lambda^\sigma=\widehat\omega(A)+\frac{\varepsilon-\sigma |Z|}{\rho\Lambda}.
\end{equation}
Thus $\phi_\lambda=\phi$ there provided $\sigma |Z|=Z$, i.e. provided
$\sigma=\operatorname{sign}Z$. In particular, a single smooth branch can be
chosen if $Z=\varepsilon-\rho\Lambda\chi$ has constant sign on the
region where the perturbation is equal to $1$, as it holds in the following proposition. 
\begin{proposition}
Assume that $\rho\ne0$, that $(M_0,h_0)$ is complete, and that \eqref{infandsup} holds.
Let $p,q\in M_0$, $p\ne q$, and let
$B_1\subset B_2\subset B_3$, $(\varphi_\lambda)_{\lambda>0}$ be a localized
deformation datum adapted to $p$ and $q$, with condition~\eqref{dab}
interpreted using the distance of $h_0$.  Choose
$\sigma=\operatorname{sign}Z$ in \eqref{phisigma}.
Then, for every $m\in\mathbb N$ and any $t_0\in\R$, there exists
$\lambda_0=\lambda_0(p,q,\varepsilon,m)>0$ such that, for every
$\lambda>\lambda_0$, the stationary electromagnetic potential vector field
$W_\lambda=\phi_\lambda^\sigma\partial_t+A$
gives at least $m$ solutions to the corresponding Lorentz-force equation,  with conserved
energy $\varepsilon$, starting at $(t_0,p)$ and ending on the stationary line
$\mathbb R\times\{q\}$, whose spatial projections are contained in
$M_0\setminus B_1$.    If the deformation datum is strictly localized, then $W_\lambda=W$, and hence the
stationary electromagnetic potential and field are unchanged, on
$\mathbb R\times(M_0\setminus B_3)$.
\end{proposition}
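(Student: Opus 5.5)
The plan is to reduce the proposition to Theorem~\ref{mainmag}, applied on the Riemannian manifold $(M_0,h_0)$ with energy level $e=-1/2$. The magnetic one-form is $\Theta$, so $B=\mathcal B=d\Theta$, and the potential is $V=-Z^2/(2\Lambda)$.

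\textbf{Step 1: hypotheses of Theorem~\ref{mainmag}.} The first condition in \eqref{infandsup} gives
\[
\overline V=\sup_{M_0}V=-\tfrac12\inf_{M_0} Z^2/\Lambda<-\tfrac12=e,
\]
so \eqref{supV} holds and $e>\overline V$. The second condition in \eqref{infandsup} is exactly the requirement $\|\Theta\|_{\alpha_e}<1$ in \eqref{fincond1}, because $h_e=2(e-V)h_0=(-1-2V)h_0$. Completeness of $h_0$ is assumed, and condition \eqref{dab} is read with $\dist_{h_0}$, as in Theorem~\ref{mainmag}. That theorem therefore yields $\lambda_0$ such that, for every $\lambda>\lambda_0$, there are at least $m$ solutions $x$ of \eqref{permag} on $M_0$, with potential $V_\lambda=V-\varphi_\lambda+1$, magnetic field $d\Theta$, energy $-1/2$, joining $p$ to $q$, and contained in $M_0\setminus B_1$.

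\textbf{Step 2: realize $V_\lambda$ by the new potential $W_\lambda$.} Define $\phi_\lambda:=\phi_\lambda^\sigma$ by the general formula preceding \eqref{phisigma}, with $\sigma=\operatorname{sign}Z$. Since $\varphi_\lambda\ge1$, the radicand $Z^2+2\Lambda(\varphi_\lambda-1)$ is at least $Z^2$. Moreover $Z^2>\Lambda>0$ everywhere, by \eqref{infandsup}. Hence the radicand is strictly positive, the square root is smooth, and $\phi_\lambda$ is smooth. Also, $Z$ never vanishes, so $\sigma$ is locally constant; on the connected manifold $M_0$ it is constant. This settles what I expect to be the only delicate point, namely the choice of a single smooth branch.

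By construction, the new function $Z_\lambda:=\varepsilon-\rho\Lambda(\phi_\lambda-\widehat\omega(A))$ satisfies $Z_\lambda^2=Z^2+2\Lambda(\varphi_\lambda-1)$. Therefore
\[
-\frac{Z_\lambda^2}{2\Lambda}=V-\varphi_\lambda+1=V_\lambda .
\]
Since $\Lambda$, $\widehat\omega$, $A$ and $\varepsilon$ are unchanged, $\Theta=\varepsilon\widehat\omega+\rho\mathcal A_{M_0}$ is unchanged as well. Note that $\mathcal A_{M_0}=A^{\flat,h_0}$ depends only on $A$ and $h_0$, and not on $\phi$; I will check this against \cite{cgm}. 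Consequently the reduced system for $W_\lambda$ at energy $\varepsilon$ is exactly \eqref{permag}.

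\textbf{Step 3: lift the spatial solutions.} By the converse part of \cite[Theorem 5.1]{cgm}, each spatial solution $x$ lifts to a worldline $\gamma=(t,x)$. The time component is
\[
t(s)=t_0+\int_0^s\Bigl(\frac{\varepsilon}{\Lambda}+\widehat\omega(\dot x)-\rho\chi_\lambda(x)\Bigr)\,d\tau,
\qquad \chi_\lambda=\phi_\lambda-\widehat\omega(A).
\]
The resulting $\gamma$ solves the Lorentz-force equation for $W_\lambda$, has energy $\varepsilon$, satisfies $g(\dot\gamma,\dot\gamma)=-1$ because of the energy normalization $h_0(\dot x,\dot x)+2V_\lambda=-1$, starts at $(t_0,p)$, and ends on $\mathbb R\times\{q\}$. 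Distinct spatial curves give distinct worldlines, so there are at least $m$ of them, and their projections lie in $M_0\setminus B_1$.

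\textbf{Step 4: strict localization.} If the datum is strictly localized, then $\varphi_\lambda=1$ on $M_0\setminus B_3$. There \eqref{phisigma} gives $\phi_\lambda=\widehat\omega(A)+(\varepsilon-Z)/(\rho\Lambda)=\phi$, since $\sigma|Z|=Z$. Hence $W_\lambda=W$ on $\mathbb R\times(M_0\setminus B_3)$, and consequently $\mathcal A$ and $\mathcal F$ are unchanged there.
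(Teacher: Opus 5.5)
Your proposal is correct and follows the paper's route. You apply Theorem~\ref{mainmag} with $e=-1/2$, $h=h_0$, $\beta=\Theta$, $B=d\Theta$. You use the constant sign of $Z$ to choose the smooth branch $\phi_\lambda^\sigma$, which realizes $V_\lambda$ with $\Theta$ unchanged. You then lift the spatial solutions through the fixed-energy correspondence, and finally check that $\phi_\lambda^\sigma=\phi$ wherever $\varphi_\lambda=1$.

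You also make explicit, correctly, points the paper leaves implicit. These are that \eqref{infandsup} gives $\overline V<-1/2$ and $\|\Theta\|_{\alpha_e}<1$ as required in \eqref{fincond1}, that the radicand stays positive, and that $\Theta$ does not depend on $\phi$.
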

\begin{proof}
For the reduced system, take $e=-1/2$, $h=h_0$, $\beta=\Theta$ and
$B=d\Theta$ in Theorem~\ref{mainmag}.   Notice that from the first inequality in \eqref{infandsup}, $Z$ must have constant sign on $M_0$. Thus,
$\phi_\lambda^\sigma$ corresponds to the change of the reduced scalar potential from $V$
to $V_\lambda=V-\varphi_\lambda+1$,  while leaving $\Theta$ fixed. 
From \eqref{infandsup} and  Theorem
\ref{mainmag} we then get at least  $m$ solutions of the reduced system,  with  energy
$-1/2$,  as reparametrization of geodesics of the associated backward and forward complete Randers metric.   The fixed-energy correspondence of \cite[Theorem~5.1]{cgm}, together
with
\[
\dot t=\frac{\varepsilon}{\Lambda}+\widehat\omega(\dot x)-\rho\chi_\lambda,\qquad \chi_\lambda=\phi_\lambda^\sigma-\hat\omega(A),
\]
lifts them to the asserted relativistic trajectories.  For strictly localized
data, $\varphi_\lambda=1$ on $M_0\setminus B_3$ and 
$\phi_\lambda^\sigma=\phi$ there.
\end{proof}

\subsection{Localized deformations of the spacelike metric in standard stationary spacetimes and multiplicity of null geodesics}

For null geodesics of $g$ in \eqref{ssst} it is well known that the projected curves are pregeodesics of the Randers     metrics, called  {\em Fermat metrics}  (see \cite{cjm,cjs})
\[
F_\pm(x,y)=\sqrt{\widehat h_x(y,y)}\pm\widehat\omega_x(y),
\]
where the sign $\pm$ is chosen according to the time orientation of the null geodesics.
In this case the geometric deformations of Theorems~\ref{main} and
\ref{mainr} have two different spacetime realizations.

Theorem~\ref{main} gives the full conformal Finsler deformation
\[
F_{\pm,\lambda}^{C}
=
\varphi_\lambda F_\pm
=
\sqrt{\varphi_\lambda^2\widehat h_x(y,y)}
\pm
\varphi_\lambda\widehat\omega_x(y).
\]
This is realized as the pair of Fermat metrics of
\[
g_\lambda^{C}
=
-\Lambda(dt-\varphi_\lambda\widehat\omega)^2
+
\varphi_\lambda^2h_0 .
\]
Indeed,
\[
\sqrt{\frac{\varphi_\lambda^2h_0}{\Lambda}}
\pm
\varphi_\lambda\widehat\omega
=
\sqrt{\varphi_\lambda^2\widehat h}
\pm
\varphi_\lambda\widehat\omega .
\]

For Theorem~\ref{mainr}, the deformation is
\[
F_{\pm,\lambda}^{R}
=
\sqrt{\varphi_\lambda\widehat h_x(y,y)}
\pm
\widehat\omega_x(y).
\]
It is realized by the stationary metric
\[
g_\lambda^{R}
=
-\Lambda dt^2+\omega\otimes dt+dt\otimes \omega +g_{0,\lambda},
\qquad
g_{0,\lambda}
=
\varphi_\lambda g_0+
\frac{\varphi_\lambda-1}{\Lambda}\omega\otimes\omega .
\]
Equivalently,
\[
g_\lambda^{R}
=
-\Lambda(dt-\widehat\omega)^2+\varphi_\lambda h_0 .
\]
Indeed, the associated Fermat metrics are
\[
\sqrt{
	\frac{1}{\Lambda}g_{0,\lambda}
	+
	\widehat\omega\otimes\widehat\omega
}
\pm
\widehat\omega
=
\sqrt{\varphi_\lambda\widehat h}
\pm
\widehat\omega .
\]
If the deformation datum is strictly localized, both $g_\lambda^{C}$ and
$g_\lambda^{R}$ coincide with $g$ on
$\mathbb R\times(M_0\setminus B_3)$.  Theorem~\ref{main} yields
multiplicity of future-pointing (and past-pointing) light rays connecting an event $(t_0, x_0)$ with a line $t\mapsto (t,x_1)$ for the localized stationary perturbation
$g_\lambda^{C}$, whereas Theorem~\ref{mainr} yields multiplicity of the same type of light rays
for the different localized stationary perturbation $g_\lambda^{R}$, provided
the completeness assumptions required in the two theorems hold.

We now point out that the localization method also gives a natural deformation
result for light rays in standard stationary spacetimes when only the
Riemannian metric $g_0$ on ${M_0}$ is deformed.
Let $p,q\in {M_0}$ and let $B_1\subset B_2\subset B_3$, $(\varphi_\lambda)_{\lambda>0}$,
be a localized deformation datum adapted to $p$ and $q$.
Let us assume that
\begin{equation}\label{dg0M12}
d_{\widehat g_0}(B_1,\partial B_2)>0, \qquad
M_{12}:=\sup_{x\in B_2\setminus B_1}\|\widehat\omega\|_{\widehat g_0,x}<+\infty.
\end{equation}
The latter condition is automatic when $\overline{B_2}$ is compact.
For every $\lambda>0$ define a new standard stationary metric on
$\mathbb R\times {M_0}$ by
\begin{equation}\label{glambda}
g_\lambda=-\Lambda\,dt^2+\omega\otimes dt+dt\otimes\omega+\varphi_\lambda g_0.
\end{equation}
Thus only the spacelike Riemannian metric $g_0$ is deformed, while the lapse
$\Lambda$ and the shift one-form $\omega$ are kept fixed.
The Fermat metrics associated with $g_\lambda$ are
\[
F_\lambda^\pm(x,y)
=
\sqrt{\widehat h_{\lambda,x}(y,y)}
\pm \widehat\omega_x(y),
\]
where
\[
\widehat h_\lambda
= \widehat\omega\otimes\widehat\omega +
\varphi_\lambda\widehat g_0 .
\]
Notice that, since $\varphi_\lambda\ge1$,
\begin{equation}\label{Flambdabb}
F_\lambda^\pm(x,y)\ge F^\pm(x,y).
\end{equation}
Consequently, every forward or backward completeness property of $F^\pm$ is
inherited by $F_\lambda^\pm$. Notice also that
$F^-(x,y)=F^+(x,-y)$, so forward completeness of one of the two Fermat metrics
is equivalent to backward completeness of the other one.
\begin{remark}
By \cite[Theorem 4.4]{cjs},  forward and backward completeness of $F^+$ (and then of $F^-$ as well) is equivalent to the property that for some $\bar t\in \R$ (and then for all $\bar t\in \R$), $\{\bar t\}\times {M_0}$ is a Cauchy hypersurface of the standard stationary spacetime $(M,g)$.
A weaker notion that can replace forward or backward completeness of $F^+$ in the next proposition is \eqref{star} introduced below.
\end{remark}

\begin{proposition}\label{lightdeformation}
 Let $(\mathbb R\times {M_0},g)$ be a standard stationary spacetime as in \eqref{ssst} and
 assume that the Fermat metric
 \[
 F^+(x,y)=\sqrt{\widehat h_x(y,y)}+\widehat\omega_x(y)
 \]
 is forward or backward complete. Let $p,q\in {M_0}$ and let  $ B_1\subset B_2\subset B_3$,   $(\varphi_\lambda)_{\lambda>0}$,  be a localized deformation datum adapted to $p$ and $q$ such that  \eqref{dg0M12} holds.
 Then, for every $m\in\mathbb N$ and any $t_0\in\R$, there exists   $\lambda_0=\lambda_0(p,q,m)>0$ such that, for every     $\lambda>\lambda_0$, the spacetime $(\mathbb R\times {M_0},g_\lambda)$, $g_\lambda$ in \eqref{glambda}, admits at     least $m$ future-pointing light rays joining $(t_0,p)$ to the line     $\mathbb R\times\{q\}$ and whose spatial projections are contained in     ${M_0}\setminus B_1$. An analogous statement holds for past-pointing light rays.
\end{proposition}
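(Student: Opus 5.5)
The plan is to reduce the statement to the variational scheme behind Theorem~\ref{main}, applied to the energy functional $E^\pm_\lambda(x)=\int_0^1 (F^\pm_\lambda)^2(x,\dot x)\,ds$ on $\Omega_{p,q}(M_0)$. By the Fermat principle \cite{cjm,cjs}, critical points of $E^+_\lambda$ are the spatial projections of future-pointing light rays of $g_\lambda$ from $(t_0,p)$ to $\mathbb R\times\{q\}$; the time component is recovered by integrating $\dot t=F^+_\lambda(x,\dot x)$ from $t_0$. Distinct geodesics give distinct light rays. The past-pointing case is identical with $F^-_\lambda$, using $F^-(x,y)=F^+(x,-y)$. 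So it suffices to produce $m$ critical points of $E^+_\lambda$ supported in $M_0\setminus B_1$. I will argue exactly as in the proofs of Theorems~\ref{main} and \ref{mainr}. The three ingredients are a barrier lemma (analogue of Lemma~\ref{cl}), a $\lambda$-independent bound on a compact set $K'\subset\Omega_{p,q}(M_0\setminus B_3)$ of category $\ge m$ (analogue of \eqref{at}), and the Palais--Smale condition.

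\emph{Palais--Smale and $K'$.} By \eqref{Flambdabb}, forward or backward completeness of $F^+$ passes to $F^+_\lambda$, so \cite[Theorem~3.1]{cjm} gives the Palais--Smale condition for $E^+_\lambda$. The compact set $K'=\mathcal R(K,1)$ is built from \cite[Proposition~3.2]{fh} and \eqref{catineq} verbatim. On $M_0\setminus B_3$ we have $\varphi_\lambda\le S$. Using $\widehat\omega(y)^2\le \widehat h(y,y)$, we get $F^+_\lambda(x,y)\le \sqrt{S\,\widehat h(y,y)}+\sqrt{\widehat h(y,y)}$. Hence $E^+_\lambda\le (\sqrt S+1)^2\sup_{K'}\int_0^1\widehat h(\dot y,\dot y)\,ds=:C$, which is independent of $\lambda$ and finite by compactness of $K'$.

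\emph{Barrier.} This is the main step, because here the one-form $\widehat\omega$ is not rescaled and is only controlled through \eqref{dg0M12}. Take $x$ meeting $B_1$ and extract $y\in Y$ as in the proof of Lemma~\ref{cl}, so that $E^+_\lambda(x)\ge\int_0^1(F^+_\lambda)^2(y,\dot y)\,ds$. On $B_2\setminus B_1$ I will bound $\sqrt{\widehat h_\lambda(v,v)}\ge\sqrt{\varphi_\lambda\widehat g_0(v,v)}\ge \sqrt{I_\lambda}\,|v|_{\widehat g_0}$. I will also bound $|\widehat\omega(v)|\le M_{12}|v|_{\widehat g_0}$. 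This gives
\[
F^+_\lambda(y,\dot y)\ge(\sqrt{I_\lambda}-M_{12})\,|\dot y|_{\widehat g_0}
\]
once $\lambda$ is so large that $\sqrt{I_\lambda}>M_{12}$. Squaring and using Cauchy--Schwarz then yields
\[
E^+_\lambda(x)\ge(\sqrt{I_\lambda}-M_{12})^2\, d_{\widehat g_0}(B_1,\partial B_2)^2 .
\]
Set $c(\lambda)$ to be half of this bound. By \eqref{hp} and \eqref{dg0M12}, $c(\lambda)\to+\infty$, and every curve in the sublevel $\{E^+_\lambda\le c(\lambda)\}$ avoids $B_1$.

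\emph{Conclusion.} Choose $\lambda_0$ such that for $\lambda>\lambda_0$ both $\sqrt{I_\lambda}>M_{12}$ and $c(\lambda)>C$ hold. Then the minimax values $c_k$ over $\Gamma_k$ in $\Omega_{p,q}(M_0\setminus B_1)$ satisfy $c_k\le C<c(\lambda)$. The deformation argument of Theorem~\ref{main} applies unchanged and gives $m$ geodesics of $F^+_\lambda$ in $M_0\setminus B_1$, which lift to the required light rays. The constants depend only on $p,q,m$, as claimed.
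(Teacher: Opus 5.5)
Your proposal is correct and follows the paper's proof essentially step by step: the Fermat reduction, Palais--Smale via \eqref{Flambdabb}, the barrier on $B_2\setminus B_1$ using \eqref{dg0M12}, the $\lambda$-independent bound $(\sqrt S+1)^2\sup_{K}\int_0^1\widehat h(\dot y,\dot y)\,ds$ on the retracted compact set, and the minimax argument of Theorem~\ref{main}. The only difference is in the barrier. You drop $\widehat\omega(v)^2$ under the square root and get the cruder constant $\sqrt{I_\lambda}-M_{12}$, which needs $\sqrt{I_\lambda}>M_{12}$. The paper keeps that term and uses the monotonicity of $t\mapsto\sqrt{t^2+a}-t$ to get $\sqrt{I_\lambda+M_{12}^2}-M_{12}$, which is positive for every $\lambda$; since both constants diverge, the difference is immaterial.
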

\begin{proof}
 We prove the future-pointing case. By Fermat's principle, future-pointing     light rays joining $(t_0,p)$ to the line $\mathbb R\times\{q\}$ are in     one-to-one correspondence, up to orientation-preserving reparametrization,     with geodesics of the Fermat metric $F_\lambda^+$ joining $p$ to $q$.   From \eqref{Flambdabb}, the metric $F_\lambda^+$ is forward or backward     complete. Hence its energy functional satisfies the Palais--Smale condition, by     \cite[Theorem~3.1]{cjm}.
 We first establish the localization estimate. Let $x\in\Omega_{p,q}({M_0})$ and    assume that $x([0,1])$ intersects $B_1$. Since $p,q\in {M_0}\setminus B_3$, the    curve must cross $B_2\setminus B_1$. Thus there is a subinterval     $[s_0,s_1]\subset(0,1)$ such that, after reparametrization, we obtain a curve     $y\in H^1([0,1],\overline B_2)$ with
 \[
 y(0)\in B_1,\qquad y(1)\in\partial B_2,  \qquad    y(s)\in B_2\setminus B_1\quad\text{for }s\in(0,1).
 \]
 On $B_2\setminus B_1$ we have $\varphi_\lambda\ge I_\lambda$. Therefore,    for every $v\in T(B_2\setminus B_1)$, it holds:
\begin{align*}
F_\lambda^+(v)&=    \sqrt{\widehat\omega(v)^2    +\varphi_\lambda\widehat g_0(v,v)}    +\widehat\omega(v)\\
    &\ge    \sqrt{\widehat\omega(v)^2    +I_\lambda\widehat g_0(v,v)}    -|\widehat\omega(v)|\\
    &\ge    \bigl(\sqrt{I_\lambda+M_{12}^2}-M_{12}\bigr)    \sqrt{\widehat g_0(v,v)}.
\end{align*}
It follows, arguing as in Lemma~\ref{cl}, that
\[
E_\lambda(x)    :=    \int_0^1(F_\lambda^+)^2(x,\dot x)\,ds    \ge    \bigl(\sqrt{I_\lambda+M_{12}^2}-M_{12}\bigr)^2    d_{\widehat g_0}(B_1,\partial B_2)^2.
 \]
Thus, setting
 \[
 c(\lambda)    = \frac{1}{2}   \bigl(\sqrt{I_\lambda+M_{12}^2}-M_{12}\bigr)^2    d_{\widehat g_0}(B_1,\partial B_2)^2,
 \]
 we have $c(\lambda)\to+\infty$, and every curve in the sublevel    $E_\lambda^{c(\lambda)}$ is contained in ${M_0}\setminus B_1$.
    
 It remains to obtain a uniform upper bound on compact sets contained in  $\Omega_{p,q}({M_0}\setminus B_3)$. Let $K\subset\Omega_{p,q}({M_0}\setminus B_3)$    be compact. Since $\varphi_\lambda\le S$ on ${M_0}\setminus B_3$ and $S\ge1$,
 we have
 \[
 \widehat h_\lambda    =    \widehat\omega\otimes\widehat\omega    +\varphi_\lambda\widehat g_0
 \le     S\widehat h    \quad\text{on }{M_0}\setminus B_3.
 \]
 Moreover, $|\widehat\omega(v)|\le\sqrt{\widehat h(v,v)}$ for every    $v\in T{M_0}$. Therefore, for $x\in K$,
\[
 F_\lambda^+(x,\dot x)    \le    (\sqrt S+1)\sqrt{\widehat h(\dot x,\dot x)},
\]
and consequently
\[
\int_0^1(F_\lambda^+)^2(x,\dot x)\,ds    \le    (\sqrt S+1)^2    \sup_{y\in K}\int_0^1\widehat h(\dot y,\dot y)\,ds    =:C,
\]
where $C$ is independent of $\lambda$. The usual Lusternik--Schnirelmann argument now applies exactly as in the proof of Theorem~\ref{mainr}. The compact set of large category is first   obtained in $\Omega_{p,q}({M_0}\setminus B_1)$ and then pushed, by the retraction     associated with the localized deformation datum, into     $\Omega_{p,q}({M_0}\setminus B_3)$ without decreasing its category. Choosing $\lambda$ so large that     $c(\lambda)>C$, the minimax levels lie below $c(\lambda)$, and the deformation     lemma takes place inside $\Omega_{p,q}({M_0}\setminus B_1)$. Hence the energy     functional of $F_\lambda^+$ has at least $m$ critical points, which are     geodesics of $F_\lambda^+$ joining $p$ to $q$ and contained in     ${M_0}\setminus B_1$. By Fermat's principle, these geodesics lift to     future-pointing light rays of $(\mathbb R\times {M_0},g_\lambda)$ joining     $(t_0,p)$ to the line $\mathbb R\times\{q\}$.
    
For the past-pointing case one uses $F_\lambda^-$. Since     $F^-(x,v)=F^+(x,-v)$, the assumed one-sided completeness of $F^+$ gives the     opposite one-sided completeness of $F^-$. Furthermore,    $F_\lambda^-\ge F^-$, and the same barrier and upper estimates hold with     $F_\lambda^-$ in place of $F_\lambda^+$.
\end{proof}

\subsection{Localized deformations of Randers--Kropina metrics}\label{R-K}
We finally consider the singular optical geometry which occurs when the
Killing vector field $\partial_t$ is  causal.  Let ${M_0}$ be a connected smooth
manifold, let $g_0$ be a Riemannian metric on ${M_0}$, let $\omega$ be a  one-form on $M_0$, and let $\Lambda\colon {M_0}\to[0,+\infty)$ be smooth.  On $\mathbb R\times {M_0}$ set
\begin{equation}\label{causalpartialt}
 g=-\Lambda\,dt^2+dt\otimes\omega+\omega\otimes dt+g_0,
 \qquad
 \Lambda+\|\omega\|_{g_0}^2>0.
\end{equation}
Then $\partial_t$ is a complete causal Killing vector field, timelike on
$\{\Lambda>0\}$ and lightlike on $\{\Lambda=0\}$.  The future optical
Lagrangian on ${M_0}$ is the {\em Randers--Kropina metric}
\begin{equation}\label{RKmetric}
 F(x,v)=
 \frac{g_0(v,v)}{-\omega(v)+
 \sqrt{\Lambda(x)g_0(v,v)+\omega(v)^2}}.
\end{equation}
Its conic domain is
\begin{equation}\label{RKdomain}
 \mathcal A_x=
 \begin{cases}
 T_x{M_0}\setminus\{0\},&\Lambda(x)>0,\\[2pt]
 \{v\in T_x{M_0}:\omega(v)<0\},&\Lambda(x)=0.
 \end{cases}
\end{equation}
At a point $x$ where $\Lambda(x)=0$, formula~\eqref{RKmetric} reduces to 
\begin{equation*}
 F(x,v)=-\frac{g_0(v,v)}{2\omega(v)},
 \qquad \omega(v)<0,
\end{equation*}
which is a singular Minkowski norm on $T_xM_0$ called {\em Kropina norm}.
The Fermat principle identifies the pregeodesics of $F$ with the spatial
projections of the  non-vertical future-pointing lightlike geodesics of
\eqref{causalpartialt}, see \cite[Theorem 5.5]{cjswind}.  

Let $p,q\in {M_0}$, $p\ne q$, and let
$B_1\subset B_2\subset B_3$, $(\varphi_\lambda)_{\lambda>0}$ be a localized
deformation datum adapted to $p$ and $q$.  In this subsection we assume in
addition that $\overline{B_2}$ is compact and that the deformation is pointwise
monotone, namely 
\begin{equation}\label{monotone-localized}
 \lambda_2\ge\lambda_1
 \quad\Longrightarrow\quad
 \varphi_{\lambda_2}(x)\ge\varphi_{\lambda_1}(x)
 \quad\text{for every }x\in {M_0}.
\end{equation}
The family constructed in Remark~\ref{exloc} has this property.  We deform
only the Riemannian tensor on $M_0$, so that 
\begin{equation}\label{causaldeformation}
 g_\lambda=
-\Lambda\,dt^2+ \omega\otimes dt+dt\otimes\omega+\varphi_\lambda g_0.
\end{equation}
The associated Randers--Kropina metric is
\begin{equation}\label{RKlambdametric}
 F_\lambda(x,v)=
 \frac{\varphi_\lambda(x)g_0(v,v)}{-\omega(v)+
 \sqrt{\Lambda(x)\varphi_\lambda(x)g_0(v,v)+\omega(v)^2}},
\end{equation}
with the same conic domain~\eqref{RKdomain}.  In particular, on
$\{\Lambda=0\}$,
\begin{equation}\label{Kropinalambda}
 F_\lambda(x,v)=-\frac{\varphi_\lambda(x)g_0(v,v)}{2\omega(v)}.
\end{equation}
Following \cite{bartolocaponio,cjmRK}, we approximate \eqref{causaldeformation}  by the standard stationary metrics
\begin{equation*}
 g_{\lambda,\epsilon}= -(\Lambda+\epsilon)dt^2+
 \omega\otimes dt+dt\otimes\omega+\varphi_\lambda g_0
, \qquad \epsilon>0,
\end{equation*}
whose Fermat metrics are the genuine Randers metrics
\begin{equation}\label{RKapprox}
 F_{\lambda,\epsilon}(x,v)=
 \frac{\sqrt{(\Lambda(x)+\epsilon)\varphi_\lambda(x)g_0(v,v)
 +\omega(v)^2}+\omega(v)}{\Lambda(x)+\epsilon}.
\end{equation}
We write $F_\epsilon$ for the approximating Randers metric obtained from
$g_0$, $\omega$ and $\Lambda$, namely, for the metric obtained by setting
$\varphi_\lambda\equiv1$ in~\eqref{RKapprox}.  We shall use the
following compactness assumption, introduced in \cite{cjs}:
\refstepcounter{equation}
\begin{equation*}\tag{$\star$}\label{star}
\begin{aligned}
 &\text{there exists $\bar\epsilon\in(0,1]$ such that every intersection of a closed}
 \\[-2pt]
 &\text{forward ball and a closed backward ball of $F_{\bar\epsilon}$ is compact.}
\end{aligned}
\end{equation*}
For $0<\epsilon\le\bar\epsilon$ and every $\lambda>0$ we have
\begin{equation}\label{RKmonotonicities}
	F_{\lambda,\epsilon}\ge F_\epsilon\ge F_{\bar\epsilon},
	\qquad
	F_{\lambda,\epsilon}<F_\lambda
	\quad\text{on }\mathcal A.
\end{equation}
Indeed, the inequality $F_\epsilon\ge F_{\bar\epsilon}$ and the strict one
follow from the monotonicity of the Randers approximation
\cite[Proposition~3.1]{cjmRK}, while $F_{\lambda,\epsilon}\ge F_\epsilon$ follows
from $\varphi_\lambda\ge1$.  By
\eqref{monotone-localized} and \eqref{RKapprox} we also have
$F_{\lambda_2,\epsilon}\ge F_{\lambda_1,\epsilon}$ for every
$\lambda_2\ge\lambda_1$ and every $0<\epsilon\le\bar\epsilon$.
Consequently, condition~\eqref{star} also gives the compactness of the
intersections of closed forward and backward balls for every
$F_{\lambda,\epsilon}$, $0<\epsilon\le\bar\epsilon$.  Throughout this
subsection, we extend $F$ and  $F_\lambda$ to the
zero section by setting their value equal to $0$.  
\begin{remark}\label{PStar}
We remark that
condition \eqref{star} is equivalent to global hyperbolicity of
$(\R\times M_0,g_{\bar \eps})$ by \cite[Proposition 2.2 and Theorem
4.3]{cjs}.  Moreover, it implies that the energy functional of
$F_{\bar\eps}$ (and thus of each $F_{\lambda,\eps}$ with
$0<\eps\le\bar\eps$) satisfies the Palais--Smale condition on
$\Omega_{p,q}(M_0)$ (see the comments above \cite[Theorem 5.2]{cjs}).
\end{remark}
For a subset $U\subset {M_0}$, let
\begin{equation*}
 \mathscr A_{p,q}(U)=
 \left\{x\in\Omega_{p,q}(U):
 \dot x(s)\in\mathcal A_{x(s)}\cup\{0\}\ \text{for a.e. }s\in [0,1],
 \ \int_0^1F^2(x,\dot x)\,ds<+\infty\right\}.
\end{equation*}
We shall need the following localized form of the standard finite category
property of energy sublevels in a Finsler manifold.  
The proof is essentially the usual one for
Riemannian path spaces, see
\cite[\S16]{milnor} (note that the broken geodesics used below are those of the
auxiliary Riemannian metric $h_R$, the Finsler metric $G$ entering only through
the energy bound~{\rm (c)}); the only additional point is that the category
is computed in $\Omega_{p,q}(M_0\setminus\Sigma)$.  The uniform positive
distance from the closed set $\Sigma$ allows the entire
finite-dimensional reduction to be carried out inside
$M_0\setminus\Sigma$.
\begin{lemma}\label{finitecategorysublevel}
Let $G$ be a (regular) Finsler metric on $M_0$, let $\Sigma\subset M_0$ be closed, and let $p,q\in M_0\setminus\Sigma$.  Let $h_R$ be a complete 	auxiliary Riemannian metric on $M_0$ and assume that 	$\mathcal Q\subset\Omega_{p,q}(M_0\setminus\Sigma)$ satisfies the following 	properties:
\begin{enumerate}[label=(\alph*)]
\item the images of all curves in $\mathcal Q$ are contained in a common compact set $K\subset M_0$;
\item there exists $\delta>0$ such that	$d_{h_R}\bigl(x([0,1]),\Sigma\bigr)\ge\delta$ for every $x\in\mathcal Q$;
\item $\sup_{x\in\mathcal Q}\int_0^1G^2(x,\dot x)\,ds<+\infty$.
\end{enumerate}
Then
\[
\cat_{\Omega_{p,q}(M_0\setminus\Sigma)}(\mathcal Q)<+\infty.
\]
\end{lemma}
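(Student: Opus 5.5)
Following Milnor's finite-dimensional approximation, we will show that $\mathcal Q$ lies in a single open subset $\mathcal U\subset\Omega_{p,q}(M_0\setminus\Sigma)$ that deformation retracts, inside $\Omega_{p,q}(M_0\setminus\Sigma)$, onto a compact finite-dimensional set $\mathcal B$ of broken $h_R$-geodesics. The category bound will then follow from monotonicity and homotopy invariance of category:
\[
\cat_{\Omega_{p,q}(M_0\setminus\Sigma)}(\mathcal Q)\le\cat_{\Omega_{p,q}(M_0\setminus\Sigma)}(\mathcal B)<+\infty .
\]

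\textbf{Step 1: uniform Hölder control.} Since $G$ is a regular Finsler metric, on the compact set $K_1$ (a closed $h_R$-neighbourhood of $K$) there is $a>0$ with $G^2(x,v)\ge a\,h_R(v,v)$. Hypothesis (c) then gives
\[
\int_0^1 h_R(\dot x,\dot x)\,ds\le L\qquad\text{for all }x\in\mathcal Q .
\]
By Cauchy--Schwarz,
\[
d_{h_R}\bigl(x(s),x(s')\bigr)\le\sqrt{L\,|s-s'|}\qquad\text{for all }x\in\mathcal Q,\ s,s'\in[0,1].
\]
This is the only place where $G$ enters.

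\textbf{Step 2: the geodesic-interpolation retraction.} Let $\rho>0$ be smaller than the $h_R$-convexity radius on $K_1$ and than $\delta/2$. Choose $N$ with $\sqrt{L/N}<\rho/2$, and set $t_i=i/N$. Define $\mathcal U$ as the set of curves $x\in\Omega_{p,q}(M_0\setminus\Sigma)$ with image in the interior of $K_1$, with
\[
d_{h_R}\bigl(x([0,1]),\Sigma\bigr)>\delta/2,
\]
and with $d_{h_R}\bigl(x(s),x(s')\bigr)<\rho$ whenever $s,s'\in[t_{i-1},t_i]$. This set is open in the $H^1$ topology, because $H^1$-convergence implies uniform convergence. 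By Steps 1 and (a),(b), it contains $\mathcal Q$.

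For $x\in\mathcal U$, let $\mathcal P(x)$ be the broken $h_R$-geodesic through $x(t_0),\dots,x(t_N)$. Milnor's homotopy $H_\tau$ replaces $x$ on $[0,\tau]$ by the broken geodesic through the points $x(t_i)$ with $t_i\le\tau$ and $x(\tau)$, and keeps $x$ on $[\tau,1]$. Each new geodesic arc joins points at distance $<\rho$ and lies in the $\rho$-ball around a point of $x$. Hence the image stays at distance greater than $\delta/2-\rho>0$ from $\Sigma$, so the whole homotopy lives in $\Omega_{p,q}(M_0\setminus\Sigma)$. This is the ``only additional point'' of the statement.

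\textbf{Step 3: compactness of the image.} The image $\mathcal P(\mathcal U)$ is parametrized by $(N-1)$-tuples of points in $K_1$ with consecutive distances at most $\rho$. We take $\mathcal B$ to be the closure of this set of tuples, which is compact. The corresponding broken geodesics depend continuously in $H^1$ on their vertices and stay at distance at least $\delta/2-\rho$ from $\Sigma$. So $\mathcal B$ is a compact subset of $\Omega_{p,q}(M_0\setminus\Sigma)$, homeomorphic to a compact subset of a finite product of copies of $M_0$, and $\mathcal P(\mathcal Q)\subset\mathcal B$.

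\textbf{Step 4: finiteness of category.} A compact subset of a locally contractible space has finite category there: cover $\mathcal B$ by finitely many small closed sets, each contractible in $\Omega_{p,q}(M_0\setminus\Sigma)$, for instance by moving the vertices within small geodesic balls avoiding $\Sigma$. Pulling such a cover back by $\mathcal P$, which is homotopic to the inclusion of $\mathcal U$, covers $\mathcal Q$ exactly as in the proof of \eqref{catineq}.

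\textbf{Main obstacle.} I expect the main difficulty to be the $H^1$-continuity of $H_\tau$ in $\tau$, together with keeping the homotopy inside $\Omega_{p,q}(M_0\setminus\Sigma)$. I would handle this as Milnor does: use smooth dependence of $h_R$-geodesics on their endpoints inside convex balls, and the absolute continuity of $\int h_R(\dot x,\dot x)$.
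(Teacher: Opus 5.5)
Your proposal is correct and is essentially the paper's argument: an $h_R$-energy bound and H\"older control from (c), then a homotopy inside $\Omega_{p,q}(M_0\setminus\Sigma)$ onto broken $h_R$-geodesics kept a positive distance from $\Sigma$, then compactness of the closure of the vertex tuples, and finally a pull-back of a finite cover by sets contractible in the path space. The differences are only in execution: the paper interpolates linearly in normal coordinates at $x(t_{j-1})$, only on $\mathcal Q$ and inside a relatively compact open $U\subset M_0\setminus\Sigma$, instead of using Milnor's sliding homotopy on an open $\mathcal U$; also, your $\mathcal B$ need not lie in $\mathcal U$, so ``deformation retracts onto $\mathcal B$'' should read ``the inclusion of $\mathcal U$ is homotopic in $\Omega_{p,q}(M_0\setminus\Sigma)$ to a map into $\mathcal B$'', which is all Step 4 uses.
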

\begin{proof}
Let us set $X:=\Omega_{p,q}(M_0\setminus\Sigma)$,	which is a Hilbert manifold, since $M_0\setminus\Sigma$ is open in $M_0$.
Replacing $K$ by $K\cap\{y\in M_0:d_{h_R}(y,\Sigma)\ge\delta\}$, which is	still compact and, by~(a) and~(b), still contains the image of every curve 	of $\mathcal Q$, we can assume that $K\subset\{y\in M_0:\ d_{h_R}(y,\Sigma)\ge\delta\}$. 	Fix $\eta\in(0,\delta/2)$ and put
$U:=\{y\in M_0:\ d_{h_R}(y,K)<\eta\}$.	Then $U$ is open and $K\subset U$; in particular $p,q\in U$.  Moreover	$d_{h_R}(y,\Sigma)>\delta-\eta>\delta/2$ for every $y\in U$, so that $U\subset M_0\setminus\Sigma$,	and $\overline U$ is compact, being closed and bounded in the complete Riemannian manifold $(M_0,h_R)$.  Since $M_0\setminus\Sigma$ is open,	$\Omega_{p,q}(U)$ is an open subset of $X$; consequently, every subset of	$\Omega_{p,q}(U)$ which is contractible in $\Omega_{p,q}(U)$ is a fortiori contractible in $X$.

Let $\rho\in(0,\eta/2)$ be such that, for every $z\in K$, the $h_R$-ball	$B_{h_R}(z,2\rho)$ is strongly convex and $\exp^{h_R}_z$ is a diffeomorphism	of the ball of radius $2\rho$ of $T_zM_0$ onto $B_{h_R}(z,2\rho)$; such a	$\rho$ exists because $K$ is compact.  Since $\rho<\eta/2$, all	these balls are contained in $U$.	Since $G$ and $\sqrt{h_R}$ are uniformly equivalent on $TM_0|_K$, by~(c)	there exists $A>0$ such that
\[
\int_0^1|\dot x|_{h_R}^2\,ds\le A,\qquad\text{for every }x\in\mathcal Q .
\]
Hence, by the Cauchy--Schwarz inequality,	
\[
d_{h_R}(x(s),x(t))\le\sqrt A\,|t-s|^{1/2},	\qquad s,t\in[0,1],\ x\in\mathcal Q .
\]
Choose a partition $0=t_0<t_1<\cdots<t_\nu=1$ such that
\[
\sqrt A\,(t_j-t_{j-1})^{1/2}<\frac\rho4,\qquad j=1,\ldots,\nu .
\]
For $x\in\mathcal Q$ and $j=1,\ldots,\nu$ set
\[
z_j(x):=x(t_{j-1}),\qquad	w_j(x):=x(t_j),\qquad	\theta_j(t):=\frac{t-t_{j-1}}{t_j-t_{j-1}} .
\]
Thus  $z_j(x),w_j(x)\in K$ and, by the choice of the partition, $d_{h_R}\bigl(z_j(x),x(t)\bigr)<\rho/4$ for every $t\in[t_{j-1},t_j]$.	Therefore the vectors
$\xi_{x,j}(t):=\bigl(\exp^{h_R}_{z_j(x)}\bigr)^{-1}\bigl(x(t)\bigr)$
are well defined and satisfy $|\xi_{x,j}(t)|_{h_R}<\rho/4$.  For 	$\tau\in[0,1]$, $j=1,\ldots,\nu$ and $t\in[t_{j-1},t_j]$, define
\[H(x,\tau)(t):=	\exp^{h_R}_{z_j(x)}	\Bigl((1-\tau)\,\xi_{x,j}(t)						+\tau\,\theta_j(t)\,\xi_{x,j}(t_j)	\Bigr).
\]
Both $\xi_{x,j}(t)$ and $\theta_j(t)\xi_{x,j}(t_j)$ have $h_R$-norm smaller	than $\rho/4$, hence so does any of their convex combinations; since the	corresponding tangent ball is contained in the domain of injectivity of	$\exp^{h_R}_{z_j(x)}$, the above expression is well defined and
\[
H(x,\tau)(t)\in B_{h_R}\bigl(z_j(x),\rho/4\bigr)\subset U .
\]
Because $\xi_{x,j}(t_{j-1})=0$ and $\theta_j(t_{j-1})=0$ we get $H(x,\tau)(t_{j-1})=z_j(x)=x(t_{j-1})$, while $\theta_j(t_j)=1$ gives	$H(x,\tau)(t_j)=w_j(x)=x(t_j)$.  Thus the definitions on consecutive intervals agree at the partition points and the endpoints $x(0)=p$ and	$x(1)=q$ are preserved, so that
$H:\mathcal Q\times[0,1]\longrightarrow\Omega_{p,q}(U)$
is well defined and satisfies $H(x,0)=x$ and $H(x,1)=P(x)$, where $P(x)$ 	denotes the broken $h_R$-geodesic which, on each interval $[t_{j-1},t_j]$,	joins $x(t_{j-1})$ to $x(t_j)$.  The continuity of $H$	in the $W^{1,2}$ topology follows from the continuity of the evaluation maps	$x\mapsto x(t_j)$ and from the smooth dependence of the exponential map and	of its local inverse on their arguments.
Indeed, all arguments remain in compact subsets of a common normal neighborhood, so the relevant maps and their first derivatives are uniformly bounded there, and the associated superposition operators are continuous on $W^{1,2}$. In particular $P=H(\cdot,1)$ is continuous.	The curve $P(x)$ depends only on the vertex tuple
$\mathcal V(x):=\bigl(x(t_1),\ldots,x(t_{\nu-1})\bigr)\in K^{\nu-1}$;
more precisely, there is a continuous map $\Pi$, defined on the set of	tuples $(z_1,\ldots,z_{\nu-1})\in K^{\nu-1}$ satisfying	$d_{h_R}(z_{j-1},z_j)\le\rho/4$ for $j=1,\ldots,\nu$, where $z_0:=p$ and	$z_\nu:=q$, such that $P=\Pi\circ\mathcal V$.  Let $\mathcal Z$ be the	closure of $\mathcal V(\mathcal Q)$ in $K^{\nu-1}$.  Then $\mathcal Z$ is	compact and, the above constraints being closed, $\Pi$ is defined on all of	$\mathcal Z$.  Hence
$\mathcal P:=\Pi(\mathcal Z)$ is a compact subset of $\Omega_{p,q}(U)$ containing $P(\mathcal Q)$. Since $\Omega_{p,q}(U)$ is a Hilbert manifold, every $y\in\mathcal P$ admits	open neighborhoods
\[
y\in W_y\subset\overline{W_y}\subset O_y\subset\Omega_{p,q}(U)
\]
with $O_y$ contractible in $\Omega_{p,q}(U)$, hence contractible in $X$.  By compactness there exist $y_1,\ldots,y_\ell\in\mathcal P$ such	that $\mathcal P\subset\bigcup_{i=1}^{\ell}W_{y_i}$.  Set
\[
\mathcal C_i:=\mathcal P\cap\overline{W_{y_i}},	\qquad	\mathcal Q_i:=P^{-1}(\mathcal C_i)\subset\mathcal Q,	\qquad i=1,\ldots,\ell .
\]
The sets $\mathcal C_i$ are closed in $\mathcal P$, they cover $\mathcal P$	and satisfy $\mathcal C_i\subset O_{y_i}$; hence each $\mathcal C_i$ is	contractible in $X$.  Since $P$ is continuous, the sets $\mathcal Q_i$ are	closed in $\mathcal Q$ and cover $\mathcal Q$.  Finally, the restriction of	$H$ to $\mathcal Q_i\times[0,1]$ is a homotopy taking values in	$\Omega_{p,q}(U)\subset X$ between the inclusion	$\mathcal Q_i\hookrightarrow X$ and the map
$P|_{\mathcal Q_i}:\mathcal Q_i\longrightarrow\mathcal C_i\subset X$,
and the latter is homotopic in $X$ to a constant map, because	$\mathcal C_i$ is contractible in $X$.  Therefore each $\mathcal Q_i$ is	contractible in $X$, and consequently
$\cat_X(\mathcal Q)\le\ell<+\infty$.
\end{proof}
In contrast with the regular Finsler case, one cannot obtain the required
minimax families merely by retracting arbitrary path-space subsets, since
such a retraction need not preserve the admissible cone.  We therefore
introduce the following condition, which gives exactly the existence of
compact admissible families of arbitrarily large category, finite
$F$-energy, and support outside the perturbed region.
\begin{definition}\label{categorycond}
The localized deformation datum satisfies the \emph{admissible category
condition} for $F$, $p$ and $q$ if, for every $k\in\mathbb N$, there exists a set
\[
 K_k\subset\mathscr A_{p,q}({M_0}\setminus B_3)
\]
which is compact in the $W^{1,2}$ topology and such that
\begin{equation*}
 \cat_{\Omega_{p,q}({M_0}\setminus B_1)}(K_k)\ge k,
 \qquad
 C_k:=\sup_{x\in K_k}\int_0^1F^2(x,\dot x)\,ds<+\infty.
\end{equation*}
\end{definition}
\begin{remark}\label{enoughRK}
A sufficient criterion for the admissible category condition follows from	the control construction in~\cite{cjmRKcorr} and the homotopy theorem	of~\cite{cms}. Suppose that there exists a connected, non-contractible	open set
$U\subset M_0\setminus B_3$, $p,q\in U$,	such that the inclusion
$i:\Omega_{p,q}(U)	\hookrightarrow	\Omega_{p,q}(M_0\setminus B_1)$
is a homotopy equivalence. Assume moreover that $\omega$ is nowhere zero	on $U$, that
$\mathcal D=\ker\omega|_U$
is completely non-holonomic of step two, and that the uniform boundedness and bracket assumptions used in
\cite{cms,cjmRKcorr} hold on $U$.	Let $\mathscr C_{p,q}(U)\subset\mathscr A_{p,q}(U)$
be the space of $W^{1,2}$ trajectories generated by the admissible 	controls introduced in~\cite{cms}, endowed with the topology	induced by $W^{1,2}([0,1],U)$. By	\cite[Theorem 3]{cjmRKcorr}, which is based on
\cite[Theorem~3.2]{cms}, the inclusion
$j:\mathscr C_{p,q}(U)\hookrightarrow\Omega_{p,q}(U)$ is a homotopy equivalence.	By the theorem of Fadell--Husseini, for every $k\ge1$ there exists a	compact set
$L_k\subset\Omega_{p,q}(U)$
such that $	\cat_{\Omega_{p,q}(U)}(L_k)\ge k$.
Let $f:\Omega_{p,q}(U)\longrightarrow\mathscr C_{p,q}(U)$ be a homotopy inverse of $j$, and set
$K_k:=j(f(L_k))$. Then $K_k$ is compact in the $W^{1,2}$ topology and 
$K_k\subset\mathscr A_{p,q}(U)	\subset\mathscr A_{p,q}(M_0\setminus B_3)$.
Since
$j\circ f\simeq\operatorname{id}_{\Omega_{p,q}(U)}$,	Remark~4 of~\cite{cjmRKcorr} gives
$\cat_{\Omega_{p,q}(U)}(K_k)\ge	\cat_{\Omega_{p,q}(U)}(L_k)	\ge k$.
Since the inclusion $i$ is a homotopy equivalence, it follows that
$\cat_{\Omega_{p,q}(M_0\setminus B_1)}(K_k)\ge k$.
Moreover, \cite[Proposition~5]{cjmRKcorr}, applied to the compact	set $L_k$, gives
\[
\sup_{x\in K_k}	\int_0^1F^2(x,\dot x)\,ds	<+\infty.
\]
Thus the admissible category condition holds.

If $U$ is relatively compact and the step-two condition holds on a neighborhood of $\overline U$, the uniform bounds on $\omega$ and on	the transverse brackets required above can be obtained from a finite	covering of $\overline U$ (see \cite[Remark 2]{cjmRKcorr}).
\end{remark}

We can now state the localized multiplicity result.
\begin{theorem}\label{mainRK}
Let $g$ be a metric on $\R\times M_0$ as in  \eqref{causalpartialt}, and assume
condition~\eqref{star}.  Let $p,q\in {M_0}$, $p\ne q$, and let
$B_1\subset B_2\subset B_3$, $(\varphi_\lambda)_{\lambda>0}$ be a localized
deformation datum adapted to $p$ and $q$, with $\overline{B_2}$ compact, $(\varphi_\lambda)_{\lambda>0}$ satisfying~\eqref{monotone-localized} and with condition~\eqref{dab} interpreted using the distance $d_0$ induced
by $g_0$.  Assume that the admissible category
condition of Definition~\ref{categorycond} holds.
Then, for every $m\in\mathbb N$ and any $t_0\in\R$, there exists
$\lambda_0=\lambda_0(p,q,m)>0$ such that, for every $\lambda>\lambda_0$, the
Randers--Kropina metric $F_\lambda$ in~\eqref{RKlambdametric} admits at least
$m$ geodesics joining $p$ to $q$, with distinct $F_\lambda$-lengths,
and with images contained in ${M_0}\setminus B_1$.  Equivalently, the spacetime
$(\R\times {M_0},g_\lambda)$ admits at least $m$ future-pointing light rays
from $(t_0,p)$ to the line $\mathbb R\times \{q\}$, whose spatial
projections avoid $B_1$.
\end{theorem}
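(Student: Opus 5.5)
The plan is to combine the localized barrier of Lemma~\ref{cl} with the Randers approximation $F_{\lambda,\epsilon}$ of \eqref{RKapprox}, run the Lusternik--Schnirelmann scheme of Theorem~\ref{main} for each $\epsilon\in(0,\bar\epsilon]$, and then let $\epsilon\to0$ as in \cite{cjmRK,cjmRKcorr}.

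\textbf{Step 1: uniform upper bound.} Fix $m$ and the compact sets $K_1,\dots,K_m$ given by the admissible category condition; let $K:=K_m$ and $C:=C_m$. Curves in $K$ lie in $M_0\setminus B_3$, where $\varphi_\lambda\le S$. From \eqref{RKlambdametric}, $F_\lambda\le S\,F$ on $\mathcal A$ over $M_0\setminus B_3$, since the numerator scales by $\varphi_\lambda$ and the denominator does not decrease. The strict inequality in \eqref{RKmonotonicities} then gives
\[
\int_0^1F_{\lambda,\epsilon}^2(x,\dot x)\,ds\le S^2C
\]
for every $x\in K$, uniformly in $\lambda$ and in $\epsilon\le\bar\epsilon$.

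\textbf{Step 2: barrier.} For $v$ over $B_2\setminus B_1$ we have $F_{\lambda,\epsilon}\ge$ the Randers metric with $\varphi_\lambda\ge I_\lambda$. Since $\overline{B_2}$ is compact, $\Lambda$ and $\|\omega\|_{g_0}$ are bounded there. Rationalizing \eqref{RKapprox} gives
\[
F_{\lambda,\epsilon}(v)=\frac{\varphi_\lambda g_0(v,v)}{\sqrt{(\Lambda+\epsilon)\varphi_\lambda g_0(v,v)+\omega(v)^2}-\omega(v)}\ge \frac{\sqrt{I_\lambda}}{\sqrt{\Lambda+1}+2\|\omega\|_{g_0}/\sqrt{I_\lambda}}\,|v|_{g_0}.
\]
This is uniform in $\epsilon\le\bar\epsilon\le1$ and grows like $\sqrt{I_\lambda}$. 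Arguing as in Lemma~\ref{cl} with $d_0(B_1,\partial B_2)>0$, we obtain a threshold $c(\lambda)\to\infty$, independent of $\epsilon$, such that every curve with $F_{\lambda,\epsilon}$-energy at most $c(\lambda)$ avoids $B_1$. We choose $\lambda_0$ so that $c(\lambda)>S^2C$.

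\textbf{Step 3: minimax for fixed $\epsilon$.} By Remark~\ref{PStar}, the energy of $F_{\lambda,\epsilon}$ satisfies Palais--Smale. The minimax values $c_k^\epsilon$ are taken over families in $\Omega_{p,q}(M_0\setminus B_1)$ of relative category $\ge k$, so $c_k^\epsilon\le S^2C<c(\lambda)$. The deformation argument of Theorem~\ref{main} then yields $F_{\lambda,\epsilon}$-geodesics $\gamma_k^\epsilon$, $k=1,\dots,m$, in $M_0\setminus B_1$ with energies $c_k^\epsilon$. We also need a lower bound separating the levels. To obtain $m$ geodesics with \emph{distinct} lengths, I would argue as follows. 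If levels coincide, the critical set at that level has category $\ge2$ relative to $\Omega_{p,q}(M_0\setminus B_1)$. Lemma~\ref{finitecategorysublevel}, with $\Sigma=B_1$ and $\mathcal Q$ the critical set, shows such sets have finite category, and in the limit one must rule out collapse. Following \cite{cjmRK}, I would rather show the limit levels $c_k:=\lim_{\epsilon\to0}c_k^\epsilon$ are distinct critical levels. These levels are monotone in $\epsilon$ by \eqref{RKmonotonicities} and bounded, so the limit exists.

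\textbf{Step 4: limit $\epsilon\to0$ (main obstacle).} The $\gamma_k^\epsilon$ have bounded energy and stay in an intersection of forward and backward $F_{\bar\epsilon}$-balls, compact by \eqref{star}. Moreover, they stay a uniform $d_0$-distance from $B_1$, since their energy lies below $c(\lambda)$ by a fixed margin. Hence they converge, uniformly and weakly in $W^{1,2}$, to a curve in $\mathscr A_{p,q}(M_0\setminus B_1)$. The results of \cite{cjmRK,cjmRKcorr} then say such limits of Randers geodesics are Randers--Kropina geodesics of $F_\lambda$ with energy $c_k$. The delicate point is distinctness. I would show that if $c_k=c_{k+1}$ for the limit levels, then the set of limit critical curves at that level, which satisfies (a)--(c) of Lemma~\ref{finitecategorysublevel} with $\Sigma=B_1$, has a neighborhood of category $\ge2$. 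This would contradict the finitely many curves only if they are isolated. Accordingly, I would prove the statement via the standard dichotomy: either the $c_k$ are pairwise distinct, giving $m$ lengths $\sqrt{c_k}$, or a level carries infinitely many geodesics, which also have finitely many lengths. In the latter case I would fall back on taking $m$ distinct values among $c_1\le\dots\le c_{m'}$ for larger $m'$. Controlling this, namely ensuring the limit critical values remain strictly increasing along a subsequence of $k$, is where I expect the real work.

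Finally, Fermat's principle \cite[Theorem 5.5]{cjswind} lifts the geodesics to future-pointing light rays of $g_\lambda$. Their arrival times are the $F_\lambda$-lengths, hence distinct.
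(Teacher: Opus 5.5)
There is a genuine gap, and it sits exactly where you place ``the real work''. Your Steps~1--2 (the $\epsilon$-uniform upper bound on the admissible compact families and the $\epsilon$-uniform barrier) match the paper. However, nothing in the proposal keeps the minimax levels separated as $\epsilon\to0$. Without that separation you get neither distinct lengths nor even $m$ distinct limit curves, because different $F_{\lambda,\epsilon}$-geodesics may converge to the same $F_\lambda$-geodesic.

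Your dichotomy does not close this. For fixed $\epsilon$, a coincidence $c_k^\epsilon=c_{k+1}^\epsilon$ gives several critical points with the \emph{same} energy. That is useless for the distinct-length claim, and it gives no information after the limit, since the limit problem lives on the admissible set $\mathscr A_{p,q}$, where no deformation lemma is available. Applying Lemma~\ref{finitecategorysublevel} to a critical set only says its category is finite, which contradicts nothing. The fallback of passing to more indices $m'$ is circular as you set it up. You fixed $K=K_m$ and chose $\lambda_0$ with $c(\lambda)>S^2C_m$. Larger indices force larger $C_{m'}$, hence larger $\lambda$, hence a different functional whose levels must be separated all over again.

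The missing idea is to apply Lemma~\ref{finitecategorysublevel} to \emph{sublevels}, not critical sets, and to choose the category indices inductively. This uses monotonicity in both $\epsilon$ and $\lambda$; the latter is hypothesis~\eqref{monotone-localized}, which your argument never uses. The construction runs as follows.
\begin{itemize}
\item Given $k_h$, take $\alpha_h>\max\{\alpha_{h-1},S^2C_{k_h}\}$ and $\lambda_h>\lambda_{h-1}$ with $b(\lambda_h)>\alpha_h$.
\item The sublevel $\mathcal Q_h=\{E_{\lambda_h,\bar\epsilon}\le\alpha_h\}$ then avoids a slightly enlarged closed set $B_1'$, with $\overline{B_1}\subset\operatorname{int}B_1'$ and $d_0(B_1',\partial B_2)>0$.
\item $\mathcal Q_h$ lies in a compact set by~\eqref{star}, so it has finite category $N_h$ in $\Omega_{p,q}(M_0\setminus B_1)$. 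Set $k_{h+1}>\max\{k_h,N_h\}$.
\item For $\lambda\ge\lambda_{h-1}$ and $0<\epsilon\le\bar\epsilon$ one has $E_{\lambda,\epsilon}\ge E_{\lambda_{h-1},\bar\epsilon}$. Hence the $\alpha_{h-1}$-sublevel of $E_{\lambda,\epsilon}$ lies in $\mathcal Q_{h-1}$, whose category is below $k_h$.
\item This yields the pinching $\alpha_{h-1}\le c_{\lambda,\epsilon,k_h}\le S^2C_{k_h}<\alpha_h<b(\lambda)$ for all $\lambda\ge\lambda_m$, uniformly in $\epsilon$.
\end{itemize}
These intervals do not depend on $\epsilon$, so they survive the limit of \cite[Proposition~3.9]{cjmRK}. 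The limit geodesics then have lengths in the disjoint intervals $[\sqrt{\alpha_{h-1}},\sqrt{\alpha_h})$, with $\lambda_0=\lambda_m$.

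The same enlargement $B_1'$ is also what makes precise the uniform distance from $B_1$ that you assert in Step~4. That distance is needed for hypothesis~(b) of Lemma~\ref{finitecategorysublevel} and to keep the limit curves off $B_1$.
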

\begin{proof}
Throughout the proof we set
\[
E_{\lambda,\epsilon}(x):=\int_0^1F_{\lambda,\epsilon}^2(x,\dot x)\,ds,	\qquad x\in\Omega_{p,q}(M_0),
\]
and
\[
L_{12}:=\max_{\overline{B_2}}(\Lambda+\bar\epsilon),\qquad M_{12}:=\max_{\overline{B_2}}\|\omega\|_{g_0}.
\]
Since $\overline{B_2}$ is compact, $\overline{B_1}\subset B_2$ and we may choose a closed set $B'_1$ such that
\begin{equation*}
\overline{B_1}\subset\operatorname{int}B'_1\subset B'_1\subset B_2,		\qquad		\delta_0:=d_0(B'_1,\partial B_2)>0.
\end{equation*}
Let $x\in B_2\setminus B_1$, $v\in T_xM_0\setminus\{0\}$, and
$0<\epsilon\le\bar\epsilon$.  Rationalizing the expression in
\eqref{RKapprox}, we obtain
\begin{align*}
F_{\lambda,\epsilon}(x,v)&=\frac{\varphi_\lambda(x)g_0(v,v)}{\sqrt{(\Lambda(x)+\epsilon)\varphi_\lambda(x)g_0(v,v)+\omega(v)^2}-\omega(v)}.
\end{align*}
Since $|\omega(v)|\le M_{12}\sqrt{g_0(v,v)}$ and	$\Lambda(x)+\epsilon\le L_{12}$, it follows that
\begin{align*}
F_{\lambda,\epsilon}(x,v)&\ge\frac{\varphi_\lambda(x)}{\sqrt{L_{12}\varphi_\lambda(x)+M_{12}^2}+M_{12}}
\sqrt{g_0(v,v)}	\ge a_\lambda\sqrt{g_0(v,v)},
\end{align*}
where
\[
a_\lambda:=\frac{I_\lambda}{\sqrt{L_{12}I_\lambda+M_{12}^2}+M_{12}}	\longrightarrow
+\infty\quad\text{as }\lambda\to+\infty.
\]
If a curve $x\in\Omega_{p,q}(M_0)$ meets $B'_1$ then, since
$p,q\in M_0\setminus B_3$, its first entrance into $B'_1$, together with the
preceding crossing of $\partial B_2$, determines an arc joining
$\partial B_2$ to $\partial B'_1$ whose interior is contained in
$B_2\setminus B'_1\subset B_2\setminus B_1$.  Hence, by the Cauchy--Schwarz
inequality,
\begin{equation}\label{RKbarrier}
E_{\lambda,\epsilon}(x)\ge a_\lambda^2\delta_0^2,
\end{equation}
uniformly for $0<\epsilon\le\bar\epsilon$.  Let us define	$b(\lambda):=\frac{1}{2} a_\lambda^2\delta_0^2$.  Then $b(\lambda)\to+\infty$.	Moreover, $I_\lambda$ is non-decreasing by \eqref{monotone-localized}; hence
$a_\lambda$ and $b(\lambda)$ are non-decreasing.  In particular, every curve in the $b(\lambda)$-sublevel of $E_{\lambda,\epsilon}$ avoids $B'_1$, for	every $0<\epsilon\le\bar\epsilon$.
We next obtain a uniform upper estimate on the compact admissible families provided by Definition~\ref{categorycond}.  Let $v\in\mathcal A\cup\{0\}$ be	based at a point of $M_0\setminus B_3$.  Using \eqref{RKmetric}, the second inequality in \eqref{RKmonotonicities} and  $\varphi_\lambda\ge1$ on $M_0$,  $\varphi_\lambda\le S$ on	$M_0\setminus B_3$, we get 	
\begin{equation*}
F_{\lambda,\epsilon}(v)\le F_\lambda(v)\le\varphi_\lambda F(v)\le S F(v).
\end{equation*}
Consequently, for every compact set $K_k$ appearing in	Definition~\ref{categorycond},	\begin{equation}\label{RKupperK}
\sup_{x\in K_k}E_{\lambda,\epsilon}(x)		\le S^2\sup_{x\in K_k}\int_0^1F^2(x,\dot x)\,ds=S^2C_k
\end{equation}
for every $\lambda>0$ and every $0<\epsilon\le\bar\epsilon$.	 Set now
$\alpha_0:=0$, $k_1:=1$ and suppose  $k_h$ has been chosen.  Let $K_{k_h}$ be supplied by the
admissible category condition.  Let us choose
$\alpha_h>\max\{\alpha_{h-1},S^2C_{k_h}\}$	and then $\lambda_h>0$, with $\lambda_h>\lambda_{h-1}$  so large	that $b(\lambda_h)>\alpha_h$.
Since $b$ is non-decreasing, this inequality remains true for every $\lambda\ge\lambda_h$.

Let us consider the sublevel
\[
\mathcal Q_h:=
\left\{
x\in\Omega_{p,q}(M_0):
E_{\lambda_h,\bar\epsilon}(x)\leq\alpha_h
\right\}.
\]
We claim that every curve in $\mathcal Q_h$ avoids $B'_1$.
Indeed, suppose by contradiction that $x\in\mathcal Q_h$ and that $x([0,1])\cap B'_1\neq\varnothing$.
Then \eqref{RKbarrier}, applied with $\lambda=\lambda_h$ and $\epsilon=\bar\epsilon$, gives
$E_{\lambda_h,\bar\epsilon}(x) \geq a_{\lambda_h}^2\delta_0^2$.
On the other hand, by the choice of $\lambda_h$,
$\alpha_h<b(\lambda_h)=\frac{1}{2}a_{\lambda_h}^2\delta_0^2 <a_{\lambda_h}^2\delta_0^2$,
and therefore 
\[
a_{\lambda_h}^2\delta_0^2
\leq E_{\lambda_h,\bar\epsilon}(x)
\leq\alpha_h
<a_{\lambda_h}^2\delta_0^2,
\]
which is impossible.  Hence, $x([0,1])\cap B'_1=\varnothing$, for every $x\in\mathcal Q_h$.
Thus,
\[
\mathcal Q_h\subset\Omega_{p,q}(M_0\setminus B'_1)\subset\Omega_{p,q}(M_0\setminus B_1).
\]
We claim that all curves in $\mathcal Q_h$ have images contained in a common	compact subset of $M_0$.  Indeed, for every $x\in\mathcal Q_h$ and every	$s\in[0,1]$,
\begin{align*}
d^+_{\lambda_h,\bar\epsilon}(p,x(s))&\le\int_0^sF_{\lambda_h,\bar\epsilon}(x,\dot x)\,dr\le\sqrt{\alpha_h},\\
d^+_{\lambda_h,\bar\epsilon}(x(s),q)&\le\int_s^1F_{\lambda_h,\bar\epsilon}(x,\dot x)\,dr\le\sqrt{\alpha_h}.
\end{align*}
Thus
\[
x([0,1])\subset	\overline B^+_{\lambda_h,\bar\epsilon}(p,\sqrt{\alpha_h})\cap
\overline B^-_{\lambda_h,\bar\epsilon}(q,\sqrt{\alpha_h}).
\]
By \eqref{RKmonotonicities}, $F_{\lambda_h,\bar\epsilon}\ge F_{\bar\epsilon}$,	and hence the preceding intersection is contained in
\[
\overline B^+_{\bar\epsilon}(p,\sqrt{\alpha_h})\cap\overline B^-_{\bar\epsilon}(q,\sqrt{\alpha_h}),
\]
which is compact by condition~\eqref{star}.  Therefore the curves in	$\mathcal Q_h$ remain in a common compact subset of $M_0$ and avoid $B'_1$; in particular, 
 they stay at a $d_0$-distance from $B_1$ at least $d_0\bigl(B_1,M_0\setminus\operatorname{int}B'_1\bigr)>0$, which is    positive because $B_1$ is compact and disjoint from the closed set      $M_0\setminus\operatorname{int}B'_1$.       Lemma~\ref{finitecategorysublevel}, applied with       $G=F_{\lambda_h,\bar\epsilon}$ and $\Sigma=B_1$   therefore gives $N_h:=\cat_{\Omega_{p,q}(M_0\setminus B_1)}(\mathcal Q_h)<+\infty$. If $h<m$, we  choose	$k_{h+1}>\max\{k_h,N_h\}$ and repeat the construction.  We obtain integers $1=k_1<k_2<\cdots<k_m$, numbers $0=\alpha_0<\alpha_1<\cdots<\alpha_m$, with $S^2C_{k_h}<\alpha_h$, and parameters $0<\lambda_1<\cdots<\lambda_m$ such that $b(\lambda_h)>\alpha_h$, for every $h=1,\ldots,m$. For $k\ge1$, let $\Gamma_k$ be the family of subsets	$A\subset\Omega_{p,q}(M_0\setminus B_1)$ satisfying $\cat_{\Omega_{p,q}(M_0\setminus B_1)}(A)\ge k$, and let us define
\[
c_{\lambda,\epsilon,k}:=\inf_{A\in\Gamma_k}\sup_{x\in A}	E_{\lambda,\epsilon}(x).
\]
Let $\lambda\ge\lambda_m$, $0<\epsilon\le\bar\epsilon$.
For $h=1$, the lower bound $c_{\lambda,\epsilon,k_1}\ge\alpha_0=0$ is	immediate. 
Let now $h\geq2$. Since
$\lambda\geq\lambda_m\geq\lambda_{h-1}$, the pointwise monotonicity
of the deformation gives $F_{\lambda,\epsilon}\geq F_{\lambda_{h-1},\epsilon}$.
On the other hand, applying the monotonicity of the Randers
approximation to the fixed data $(\varphi_{\lambda_{h-1}}g_0,\omega,\Lambda)$, the inequality
$0<\epsilon\leq\bar\epsilon$ yields $F_{\lambda_{h-1},\epsilon}\geq F_{\lambda_{h-1},\bar\epsilon}$.
Consequently, $E_{\lambda,\epsilon}(x)\geq E_{\lambda_{h-1},\bar\epsilon}(x)$, for every $x\in\Omega_{p,q}(M_0)$.
Therefore, if $E_{\lambda,\epsilon}(x)\leq\alpha_{h-1}$, then
$E_{\lambda_{h-1},\bar\epsilon}(x)\leq E_{\lambda,\epsilon}(x)\leq\alpha_{h-1}$,
and hence, by the definition of $\mathcal Q_{h-1}$,
$x\in\mathcal Q_{h-1}$. Thus,
\[
\left\{E_{\lambda,\epsilon}\leq\alpha_{h-1}\right\}\subset\mathcal Q_{h-1}.
\]
Since	$\cat_{\Omega_{p,q}(M_0\setminus B_1)}(\mathcal Q_{h-1})=N_{h-1}<k_h$,
no set belonging to $\Gamma_{k_h}$ can be contained in the	$\alpha_{h-1}$-sublevel of $E_{\lambda,\epsilon}$. Hence $c_{\lambda,\epsilon,k_h}\ge\alpha_{h-1}$.
On the other hand,
\[
K_{k_h}\subset\mathscr A_{p,q}(M_0\setminus B_3)\subset	\Omega_{p,q}(M_0\setminus B_1)
\]
and $\cat_{\Omega_{p,q}(M_0\setminus B_1)}(K_{k_h})\ge k_h$.
Thus $K_{k_h}\in\Gamma_{k_h}$, and \eqref{RKupperK} gives $c_{\lambda,\epsilon,k_h}\le S^2C_{k_h}$.
Since $\lambda\ge\lambda_m\ge\lambda_h$ and $b$ is non-decreasing,	$\alpha_h<b(\lambda)$.  We have therefore proved the pinching estimates
\begin{equation}\label{RKpinching}
	\alpha_{h-1}\le c_{\lambda,\epsilon,k_h}\le S^2C_{k_h}<\alpha_h<b(\lambda),		\qquad h=1,\ldots,m.
\end{equation}
By Remark~\ref{PStar}, $E_{\lambda,\epsilon}$	satisfies the Palais--Smale condition on $\Omega_{p,q}(M_0)$.  We claim	that every value $c_{\lambda,\epsilon,k_h}$ is critical.  Indeed, suppose	that $c:=c_{\lambda,\epsilon,k_h}$ is not a critical value.  By
\eqref{RKpinching}, one can choose $\eta>0$ such that $c+2\eta<b(\lambda)$	and such that the interval $[c-2\eta,c+2\eta]$ contains no critical value.	The  Deformation Lemma then 
lowers the $(c+\eta)$-sublevel into the $(c-\eta)$-sublevel.  Since every level involved is below $b(\lambda)$, the	barrier estimate implies that every curve occurring during this deformation
avoids $B'_1$.  Thus the deformation is entirely contained in	$\Omega_{p,q}(M_0\setminus B_1)$ and, exactly as in the proof of	Theorem~\ref{main}, it does not decrease the Lusternik--Schnirelmann	category relative to $\Omega_{p,q}(M_0\setminus B_1)$.  This contradicts	the definition of $c_{\lambda,\epsilon,k_h}$.  Therefore
$c_{\lambda,\epsilon,k_h}$ is a critical value.

For every $h=1,\ldots,m$, choose a corresponding critical point	$x_{\epsilon,h}$.  It is an $F_{\lambda,\epsilon}$-geodesic from $p$ to $q$	and satisfies
\begin{equation}\label{ineqh}
	\alpha_{h-1}\le E_{\lambda,\epsilon}(x_{\epsilon,h})\le S^2C_{k_h}<\alpha_h.
\end{equation}
The energy levels $ E_{\lambda,\epsilon}(x_{\epsilon,h})$ are then pairwise distinct, since $S^2C_{k_h}<\alpha_h$, whereas	$\alpha_h$ is the lower endpoint of the inequalities with $h+1$ as in \eqref{ineqh}.  Consequently, the	geodesics $x_{\epsilon,1},\ldots,x_{\epsilon,m}$ are pairwise distinct and avoid $B'_1$.	Fix now $\lambda\ge\lambda_m$ and let $\epsilon_n\downarrow0$.  For each	$h$, the energies $E_{\lambda,\epsilon_n}(x_{\epsilon_n,h})$ are uniformly	bounded by $S^2C_{k_h}$.  The convergence theorem for Randers approximations	of a Randers--Kropina metric, \cite[Proposition~3.9]{cjmRK}, applies to the	fixed data $(\varphi_\lambda g_0,\omega,\Lambda)$ (recall that condition	\eqref{star} holds for these data with the same $\bar\epsilon$, since the	closed balls of $F_{\lambda,\bar\epsilon}$ are contained in those of	$F_{\bar\epsilon}$ by \eqref{RKmonotonicities}).  Since $p\ne q$, the	limiting spatial curves are nonconstant.  Passing to a subsequence,  the	curves $x_{\epsilon_n,h}$ converge uniformly, for every $h=1,\ldots,m$, to	an $F_\lambda$-geodesic $x_h\colon[0,1]\to M_0$ joining $p$ to $q$, and $\ell_{F_{\lambda,\epsilon_n}}(x_{\epsilon_n,h})\longrightarrow	\ell_{F_\lambda}(x_h)$.
The approximating geodesics are parametrized with constant	$F_{\lambda,\epsilon_n}$-speed on $[0,1]$. Therefore $E_{\lambda,\epsilon_n}(x_{\epsilon_n,h})	=\ell_{F_{\lambda,\epsilon_n}}(x_{\epsilon_n,h})^2$.
Passing to the limit in \eqref{ineqh}, we obtain
\[
\sqrt{\alpha_{h-1}}\le\ell_{F_\lambda}(x_h)\le S\sqrt{C_{k_h}}	<\sqrt{\alpha_h}.
\]
These intervals are pairwise disjoint, and hence the limiting	$F_\lambda$-geodesics have pairwise distinct lengths.
Moreover, they  are contained in $M_0\setminus \mathrm{int}(B'_1)$, thus they avoid $B_1$.	

Finally, by the Fermat principle in $(\R\times M_0, g_\lambda)$,	each $F_\lambda$-geodesic $x_h$ lifts to a future-pointing null pregeodesic of	$g_\lambda$ starting at $(t_0,p)$ and ending at	$\bigl(t_0+\ell_{F_\lambda}(x_h),q\bigr)\in\mathbb R\times\{q\}$.  The	corresponding arrival times are pairwise distinct because the	$F_\lambda$-lengths are pairwise distinct.  Taking $\lambda_0:=\lambda_m$
concludes the proof.
\end{proof}

\begin{remark}
When $\Lambda\equiv0$, the Killing field is null everywhere and
Theorem~\ref{mainRK} is a localized multiplicity theorem for the Kropina
metrics~\eqref{Kropinalambda}.  Through Zermelo navigation, this is the {\em critical wind} case in \cite{cjswind}, in which the
indicatrix is tangent to the zero section; through the spacetime interpretation,
it gives multiple light rays for a localized deformation of the 
Riemannian tensor on $M_0$ while the  one-form $\omega$ in \eqref{causalpartialt} is kept fixed.  This is the
singular counterpart of Proposition~\ref{lightdeformation}.
\end{remark}
\begin{example}\label{RKexample}
We exhibit an explicit set of data, on a contractible $M_0$, satisfying all
the hypotheses of Theorem~\ref{mainRK}. Let $M_0=\R^3$ with the standard coordinates $x=(x^1,x^2,x^3)$, let $g_0$ be	the Euclidean metric and put
\[\omega:=dx^3-x^2\,dx^1 ,	\qquad	\Lambda(x):=\zeta(|x|),
\]
where $|\cdot|$ is the Euclidean norm and $\zeta\colon[0,+\infty)\to[0,1]$ is	smooth and non-decreasing, with $\zeta\equiv0$ on $[0,3]$, $\zeta\equiv1$ on	$[4,+\infty)$ and $0<\zeta<1$ on $(3,4)$.  Since	$\|\omega\|_{g_0}^2=1+(x^2)^2\ge1$, the inequality in \eqref{causalpartialt} holds and
\[
g=-\Lambda\,dt^2+dt\otimes\omega+\omega\otimes dt+g_0
\]
is a Lorentzian metric on $\R\times\R^3$ whose Killing vector field	$\partial_t$ is causal: it is lightlike on the closed ball $\{|x|\le3\}$ and	timelike outside it.  The associated future optical Lagrangian~\eqref{RKmetric}	is therefore a  Randers--Kropina metric, of Kropina type exactly on	$\{|x|\le 3\}$.  
	
\smallskip
\emph{Condition~\eqref{star}.}
Let $0<\bar\epsilon\le1$.  Rationalizing~\eqref{RKapprox} with	$\varphi_\lambda\equiv1$ and using $0\le\Lambda\le1$, $\bar\epsilon\le1$ and	$|\omega(v)|\le\sqrt{1+(x^2)^2}\,|v|$, we get, for every	$v\in T_xM_0\setminus\{0\}$,
\[
F_{\bar\epsilon}(x,v)	=\frac{g_0(v,v)}{\sqrt{(\Lambda(x)+\bar\epsilon)g_0(v,v)+\omega(v)^2}-\omega(v)}
	\ \ge\
	\frac{|v|}{\sqrt2+2\sqrt{1+(x^2)^2}}	=:\sqrt{\tilde g_x(v,v)} ,
\]
where $\tilde g:=\bigl(\sqrt2+2\sqrt{1+(x^2)^2}\bigr)^{-2}g_0$ is a Riemannian
metric conformal to $g_0$.  Since
\[
\inf_{|x|=\tau}\bigl(\sqrt2+2\sqrt{1+(x^2)^2}\bigr)^{-1}=\bigl(\sqrt2+2\sqrt{1+\tau^2}\bigr)^{-1}
	\quad\text{and}\quad	\int_0^{+\infty}\frac{d\tau}{\sqrt2+2\sqrt{1+\tau^2}}=+\infty,
\]
the metric $\tilde g$ is complete.  Hence every closed forward or backward	ball of $F_{\bar\epsilon}$ is a closed subset of a compact $\tilde g$-ball,	and is therefore compact.  Thus~\eqref{star} holds, for every	$\bar\epsilon\in(0,1]$.
	
	\smallskip
\emph{The localized deformation datum.}
Take the datum of Remark~\ref{exloc} with $M_0=\R^3$, the identity chart,	$x_0=0$ and radii $\rho_0=1,\rho_1=2, \rho_2=3,\rho_3=4$, so that	
\[	B_1=\{|x|\le2\},\qquad B_2=\{|x|<3\},\qquad B_3=\{|x|<4\},	\]
and $\varphi_\lambda=1+\lambda\,\eta(|x|)$ with $\eta$ as in	Remark~\ref{exloc}.  Then $\overline{B_2}$ is compact,	$d_0(B_1,\partial B_2)=1>0$, the family$(\varphi_\lambda)_{\lambda>0}$	satisfies~\eqref{monotone-localized} because $\eta\ge0$, and the datum is	strictly localized, the perturbation being supported in	$\{1\le|x|\le4\}$.  Finally, $M_0\setminus B_1=\{|x|>2\}$ is connected and	non-contractible, although $M_0$ is contractible.	Fix $r=5$, let $S:=\{|x|=r\}$ and let $p,q\in S$, $p\ne q$; then	$p,q\in M_0\setminus B_3$, so the datum is adapted to $p$ and $q$.
	
\smallskip
\emph{The admissible category condition.}
Let
\[
U:=\{x\in\R^3:\ 4<|x|<6\}\subset M_0\setminus B_3 ,
\]
an open, connected, non-contractible subset of $\R^3$ containing $S$, hence	$p$ and $q$.  Since $\Lambda\equiv1$ on $\overline U$, the metric $F$ is a	Randers metric on a neighborhood of $\overline U$; in particular	$\mathcal A_x=T_xM_0\setminus\{0\}$ for $x\in U$, so that
\[	\mathscr A_{p,q}(U)=\Omega_{p,q}(U),
\]
and, setting $M_U:=\max_{\overline U}\sqrt{1+(x^2)^2}$, the same	rationalization as above gives
\begin{equation}\label{RKexbound}
F(x,v)\le(1+2M_U)\,|v| ,\qquad x\in\overline U,\ v\in T_xM_0 .
\end{equation}
For $s\in[0,1]$ define $T_s\colon\{|x|>2\}\to\{|x|>2\}$ 
by
\[
T_s(x):=\bigl((1-s)|x|+s\,r\bigr)\frac{x}{|x|} .
\]
Exactly as in Remark~\ref{exloc}, the maps $T_s$ are locally Lipschitz,	locally uniformly in $s$, and induce a homotopy	$(\gamma,s)\mapsto T_s\circ\gamma$ which is continuous in the $H^1$ topology.	Since $|p|=|q|=r$, each $T_s$ fixes $p$ and $q$; moreover	$T_s\bigl(\{|x|>2\}\bigr)\subset\{|x|>2\}$ and, because $4<r<6$,	$T_s(U)\subset U$.  Hence $T$ induces deformation retractions of both	$\Omega_{p,q}(U)$ and $\Omega_{p,q}(M_0\setminus B_1)$ onto	$\Omega_{p,q}(S)$, and therefore the inclusion
\[
i\colon\Omega_{p,q}(U)\hookrightarrow\Omega_{p,q}(M_0\setminus B_1)
\]
is a homotopy equivalence. Now $U$ is a non-contractible open subset of $\R^3$, so	\cite[Proposition~3.2]{fh} applies to it  and provides, for every	$k\in\N$, a compact set $K_k\subset\Omega_{p,q}(U)$ with	$\cat_{\Omega_{p,q}(U)}(K_k)\ge k$.  Since $i$ is a homotopy equivalence,
\[
\cat_{\Omega_{p,q}(M_0\setminus B_1)}(K_k)\ge k .
\]
Finally, $K_k$ being compact in $\Omega_{p,q}(U)$, the union of the images of	its curves is a compact subset of $U$ and	$\sup_{x\in K_k}\int_0^1|\dot x|^2ds<+\infty$; hence, by~\eqref{RKexbound},
\[
C_k=\sup_{x\in K_k}\int_0^1F^2(x,\dot x)\,ds<+\infty .
\]
Thus the admissible category condition of Definition~\ref{categorycond} holds, the singular region 	$\{\Lambda=0\}$ being disjoint from $\overline U$.
	
\smallskip
\emph{Conclusion.}
All the hypotheses of Theorem~\ref{mainRK} are satisfied.  Consequently, for
every $m\in\N$ and every $t_0\in\R$ there is $\lambda_0>0$ such that, for
$\lambda>\lambda_0$, the Randers--Kropina metric
\[
F_\lambda(x,v)=	\frac{\varphi_\lambda(x)\,g_0(v,v)}{-\omega(v)+\sqrt{\Lambda(x)\varphi_\lambda(x)g_0(v,v)+\omega(v)^2}}
\]
admits at least $m$ geodesics from $p$ to $q$, with pairwise distinct	$F_\lambda$-lengths and images contained in $\{|x|>2\}$; equivalently, the	spacetime $\bigl(\R\times\R^3,g_\lambda\bigr)$, with $g_\lambda$ as	in~\eqref{causaldeformation}, admits at least $m$ future-pointing light rays	from $(t_0,p)$ to the line $\R\times\{q\}$, with pairwise distinct arrival	times and spatial projections avoiding the ball $\{|x|\le2\}$.  Since the	datum is strictly localized, $g_\lambda=g$ on $\R\times\{|x|>4\}$.
\end{example}
\begin{remark}
The same construction works in the purely Kropina case $\Lambda\equiv0$, in	which $\partial_t$ is lightlike everywhere and $F$ is the Kropina metric	$-g_0(v,v)/(2\omega(v))$ on $\{\omega(v)<0\}$.  Condition~\eqref{star} is	verified word for word as in Example~\ref{RKexample}, and the localized	deformation datum is unchanged.  The verification of the admissible category condition, however, is no longer elementary, since now	$\mathcal A_x=\{v:\omega(v)<0\}$ on all of $M_0$ and	$\mathscr A_{p,q}(U)\subsetneq\Omega_{p,q}(U)$.  One then applies the	sufficient criterion stated in Remark~\ref{enoughRK}: the one-form $\omega=dx^3-x^2dx^1$ is nowhere zero and	satisfies $\omega\wedge d\omega=dx^1\wedge dx^2\wedge dx^3\ne0$, so	$\mathcal D=\ker\omega|_U$ is a contact distribution, and in particular completely non-holonomic of	step two and it remains so also in the compact set $\bar U$.  Thus the required assumptions in \cite{cjmRKcorr} hold on $U$ (we recall \cite[Remark 2]{cjmRKcorr}).
\end{remark}
\section*{Acknowledgments}
	\noindent E.C. and A.M. are  partially supported   by  MUR under the Programme ``Department of Excellence'' Legge 232/2016  (Grant No. CUP - D93C23000100001).\\
All the authors  are partially supported  by  INdAM - GNAMPA and in particular E.C. and A.M. by  ``INdAM - GNAMPA Project''  CUP E53C25002010001.

\end{document}